\newcommand{\R}{\mathbb R}
\newtheorem{thm}{Theorem}[section]
\newtheorem{lem}[thm]{Lemma}
\newtheorem{prop}[thm]{Proposition}
\newtheorem*{thma}{Theorem A}
\newtheorem*{thma'}{Theorem A'}
\newtheorem*{exa}{Example A}
\newtheorem*{prob}{Problem A}
\newtheorem*{thmb}{Theorem B}
\theoremstyle{definition}
\newtheorem{defn}[thm]{Definition}
\newtheorem{remark}[thm]{Remark}
\begin{document}
\title{Minimal diffeomorphisms with $L^1$ Hopf differentials}
\author{Nathaniel Sagman}
\begin{abstract}
     We prove that for any two Riemannian metrics $\sigma_1, \sigma_2$ on the unit disk, a homeomorphism $\partial\mathbb{D}\to\partial\mathbb{D}$ extends to at most one quasiconformal minimal diffeomorphism $(\mathbb{D},\sigma_1)\to (\mathbb{D},\sigma_2)$ with $L^1$ Hopf differential. For minimal Lagrangian diffeomorphisms between hyperbolic disks, the result is known, but this is the first proof that does not use anti-de Sitter geometry. We show that the result fails without the $L^1$ assumption in variable curvature. The key input for our proof is the uniqueness of solutions for a certain Plateau problem in a product of trees.
\end{abstract}

\maketitle

\begin{section}{Introduction}
\subsection{Minimal diffeomorphisms}
Throughout, $\mathbb{D}\subset \mathbb{C}$ denotes the unit disk. A diffeomorphism between Riemannian surfaces $f:(\Sigma,\sigma_1)\to (\Sigma,\sigma_2)$ is a minimal diffeomorphism if its graph inside the product $4$-manifold $(\Sigma^2,\sigma_1\oplus\sigma_2)$ is a minimal surface. Schoen first proved that any diffeomorphism between closed hyperbolic surfaces can be deformed to a unique minimal one \cite{Sc} (see also \cite{Wa} and \cite[Theorems B1 and C]{MSS}). Bonsante-Schlenker then proved the following striking result \cite{BS}.
\begin{thm}[Bonsante-Schlenker]\label{BS}
    Let $\varphi:\partial\mathbb{D}\to\partial \mathbb{D}$ be a quasisymmetric map and let $\sigma$ be a hyperbolic metric on $\mathbb{D}$. There exists a unique minimal diffeomorphism $f:(\mathbb{D},\sigma)\to (\mathbb{D},\sigma)$ that extends to $\varphi$ on $\partial\mathbb{D}.$ Moreover, $f$ is quasiconformal.
\end{thm}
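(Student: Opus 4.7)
The natural route, which is the one carried out by Bonsante--Schlenker, is to reformulate the problem inside three-dimensional anti-de Sitter geometry. The space $\mathrm{AdS}_3$, modeled on $\mathrm{PSL}(2,\R)$ with a bi-invariant Lorentzian metric, carries two natural ``Gauss'' projections to $\mathbb{H}^2$, and a classical correspondence identifies spacelike maximal (mean-curvature zero) surfaces $S\subset\mathrm{AdS}_3$ with minimal Lagrangian diffeomorphisms of $\mathbb{H}^2$ via $f=\Pi_2\circ\Pi_1^{-1}$. Since any minimal diffeomorphism between two hyperbolic metrics is automatically minimal Lagrangian, the theorem reduces to the Plateau-type problem: given the topological circle $\Lambda_\varphi\subset\partial\mathrm{AdS}_3$ obtained as the graph of $\varphi$, produce a unique complete spacelike maximal surface asymptotic to $\Lambda_\varphi$.

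For existence, I would first verify that quasisymmetry of $\varphi$ is equivalent to $\Lambda_\varphi$ being a uniformly acausal loop in $\partial\mathrm{AdS}_3$. Then I would exhaust $\Lambda_\varphi$ by smooth acausal loops bounding compact acausal disks, solve the compact Plateau problem for maximal surfaces with those boundaries (the spacelike structure supplies the needed barriers), and extract a subsequential limit. Uniform acausality should furnish the a priori second-fundamental-form and distance-to-boundary estimates needed to keep the limit complete, spacelike, and asymptotic to $\Lambda_\varphi$. The induced $f=\Pi_2\circ\Pi_1^{-1}$ then extends continuously to $\varphi$ on $\partial\mathbb{D}$, and the curvature bound on $S$ converts into a bound on the conformal distortion of $f$, yielding quasiconformality.

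Uniqueness is the main obstacle. Locally the mean-curvature operator is elliptic along spacelike surfaces, so two tangent maximal surfaces agree; the difficulty is promoting this to a global statement for two surfaces that share only an asymptotic boundary. I would attempt to confine any two candidate solutions $S_1,S_2$ to a common globally hyperbolic region using barriers built from the chronological domain of dependence of $\Lambda_\varphi$, and then run a comparison on the signed Lorentzian separation of $S_1$ and $S_2$, which along each surface satisfies a linear elliptic inequality. The delicate point is combining this Lorentzian maximum principle with enough asymptotic control to rule out interior separation inherited from the boundary; this is where the full strength of quasisymmetry must enter, and it is also the step for which this paper proposes to develop an alternative, AdS-free approach.
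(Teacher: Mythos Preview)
This theorem is not proved in the paper. It is stated in the introduction as a result of Bonsante--Schlenker, with a citation to \cite{BS}, and is used only as background and motivation. The paper's own contributions are Theorems A, A$'$, and B; in particular, Theorem A recovers merely the \emph{uniqueness} part of Theorem~\ref{BS}, and only under the additional hypothesis that the Hopf differential is $L^1$ (which, by Remark~\ref{Bers}, is a genuine restriction in the hyperbolic case). The paper does not address existence at all, nor does it prove uniqueness in the full generality of Theorem~\ref{BS}.

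Your proposal is therefore not a proof of anything the paper proves; it is a (fair) sketch of the original Bonsante--Schlenker argument via $\mathrm{AdS}_3$ maximal surfaces. As a sketch of \emph{that} argument it is reasonable in outline, though several of the hard steps are only named, not carried out: the a priori estimates needed to pass to a limit of compact Plateau solutions, the completeness of the limiting surface, and especially the global maximum-principle argument for uniqueness are each substantial pieces of work in \cite{BS} (and in the later proofs \cite{BSe}, \cite{LTW}, \cite{SST}). You yourself flag the uniqueness step as ``the main obstacle'' and leave it open.

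If you intended instead to compare with the paper's own approach: the paper's method is entirely disjoint from what you wrote. It never enters $\mathrm{AdS}_3$. Instead, given two putative minimal diffeomorphisms with the same boundary values, it compares their energies on subdisks $\mathbb{D}_r$ via the Reich--Strebel formula, bounds the discrepancy below using the inequality $e(h)>2|\phi(h)|$, and then invokes Theorem~B (the ``new main inequality'' for $n=2$ on $\mathbb{D}$), which is in turn proved by solving a Plateau problem in a product of $\mathbb{R}$-trees dual to polynomial quadratic differentials. The $L^1$ hypothesis is exactly what makes the tree-side quantities finite so that Theorem~B applies. None of this appears in your proposal.
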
 
Note that any graph of a minimal diffeomorphism in $(\mathbb{D}^2,\sigma\oplus \sigma)$ is necessarily Lagrangian. Three more proofs of Theorem \ref{BS} have appeared in the literature \cite{BSe}, \cite{LTW}, \cite{SST}; including \cite{BS}, all of the proofs have exploited a correspondence between minimal Lagrangian graphs in $(\mathbb{D}^2,\sigma\oplus \sigma)$ and spacelike maximal surfaces in $3d$ anti-de Sitter space, $\textrm{AdS}^3$. The $\textrm{AdS}^3$ perspective reveals a lot of interesting mathematics and deserves further exploration, but perhaps some of the geometry of minimal diffeomorphisms becomes obscured.

In this paper we give a short and purely Riemannian proof of the uniqueness statement, albeit under an additional integrability assumption, but which also applies in a broader setting. The proof is a manifestation of the basic fact that among all maps from the unit disk that fill up a Jordan domain $\Omega\subset \mathbb{C},$ the Riemann map minimizes the Dirichlet energy.

Before stating our main theorem, we should recall that Markovi{\'c} proved in \cite{M1} that any minimal diffeomorphism between closed Riemannian surfaces of genus at least $2$ is unique in its homotopy class, interestingly with no requirement on the curvature of the metrics. Previously, stability and uniqueness of minimal diffeomorphisms had been studied by Wan in \cite{Wa} under additional curvature assumptions (see also \cite[Proposition 1.7]{DL}). The results of the current paper are inspired by the paper \cite{M1}, along with the older paper \cite{MM} of Mateljevi{\'c}-Markovi{\'c} about harmonic maps of the disk. 

Let $\sigma_1,\sigma_2$ be a pair of Riemannian metrics on the unit disk. Given the graph of a minimal map $G\subset (\mathbb{D}^2,\sigma_1\oplus \sigma_2)$, the projections $p_i:G\to (\mathbb{D},\sigma_i),$ $i=1,2,$ are harmonic. Any harmonic map from a surface comes with a holomorphic quadratic differential called the Hopf differential, and the Hopf differentials of $p_1$ and $p_2$ necessarily sum to zero. Conversely, a pair of harmonic diffeomorphisms $h_i:\mathbb{D}\to (\mathbb{D},\sigma_i)$ with opposite Hopf differentials determines a minimal diffeomorphism via $h_1\circ h_2^{-1}.$  We refer to the Hopf differential of the first harmonic projection as the Hopf differential of the minimal diffeomorphism.

\begin{thma}
Let $\sigma_1$ and $\sigma_2$ be a pair of Riemannian metrics on $\mathbb{D}$ and let $\varphi:\partial\mathbb{D}\to\partial\mathbb{D}$ be a homeomorphism. Then $\varphi$ extends to at most one quasiconformal minimal diffeomorphism $(\mathbb{D},\sigma_1)\to(\mathbb{D},\sigma_2)$ with integrable Hopf differential.
\end{thma}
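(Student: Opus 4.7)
The plan is to recast the uniqueness of the minimal diffeomorphism as a Plateau uniqueness problem in a product of real trees, and then invoke the main technical input advertised in the introduction. First, I would reformulate: a quasiconformal minimal diffeomorphism $f:(\mathbb{D},\sigma_1)\to(\mathbb{D},\sigma_2)$ is equivalent, after conformally parametrizing its graph $G_f\subset(\mathbb{D}^2,\sigma_1\oplus\sigma_2)$, to a pair of harmonic diffeomorphisms $h_1:\mathbb{D}\to(\mathbb{D},\sigma_1)$ and $h_2:\mathbb{D}\to(\mathbb{D},\sigma_2)$ with opposite Hopf differentials $\pm\Phi$, satisfying $f=h_2\circ h_1^{-1}$. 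Two candidate extensions $f,g$ of $\varphi$ yield two such pairs; after a M\"obius normalization of the source aligning three boundary points of the two conformal parametrizations, it suffices to prove $\Phi_f=\Phi_g$ and then $h_i^f=h_i^g$ for $i=1,2$.

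Next, I would attach to each $\Phi_f\in L^1(\mathbb{D})$ its dual real trees $T_v,T_h$ (the leaf spaces of the vertical and horizontal measured foliations of $\Phi_f$) and the natural harmonic leaf-space projections $\pi_v,\pi_h:\mathbb{D}\to T_v,T_h$ whose Hopf differentials are $\pm\Phi_f$. Their combination $\pi^f=(\pi_v,\pi_h):\mathbb{D}\to T_v\times T_h$ is conformal and harmonic---a minimal disk in a CAT(0) product---and is the identity in flat $|\Phi_f|$-coordinates. The integrability $\Phi_f\in L^1$ combined with quasiconformality of $f$ endow this minimal disk with finite total area and provide a continuous extension to $\partial\mathbb{D}$.

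The crux of the argument is to show that the tree-valued boundary data of $\pi^f$---both the trees themselves and the maps $\partial\mathbb{D}\to T_v,T_h$---are determined by $\varphi$ and the chosen normalization, not by the bulk of $f$. Granting this, the two Plateau disks $\pi^f,\pi^g$ share their boundary data in a common product of trees. The uniqueness statement for Plateau problems in products of trees (the paper's key input) then yields $\pi^f=\pi^g$ and hence $\Phi_f=\Phi_g$. This is where the ``Riemann-mapping energy-minimization'' heuristic manifests: in the CAT(0) product, convexity pins down the conformal harmonic filling of the boundary data uniquely, just as the Riemann map is the unique energy-minimizing filling of a Jordan domain.

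Finally, knowing that the two harmonic diffeomorphisms $h_i^f,h_i^g$ share Hopf differential and boundary values, I would conclude $h_i^f=h_i^g$ by a Mateljevi\'c--Markovi\'c-style uniqueness result for harmonic disks with prescribed Hopf differential, yielding $f=g$. The main obstacle is the middle step---showing that the tree-valued data of $\pi^f$ is intrinsically determined by $\varphi$, so that $\pi^f$ and $\pi^g$ are genuinely comparable in one and the same product of trees. The $L^1$ hypothesis enters here in an essential way: it is what endows the flat metric $|\Phi_f|$ with finite total area, which in turn is needed both to build the trees as honest global objects and to control the boundary behavior enough for the comparison to make sense.
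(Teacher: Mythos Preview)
Your strategy diverges from the paper's and contains a genuine gap at precisely the step you flag as ``the main obstacle.'' You want to show that the trees $T_v,T_h$ attached to $\Phi_f$, together with the boundary maps $\partial\mathbb{D}\to T_v,T_h$, are determined by $\varphi$ alone. But the trees are the leaf spaces of the measured foliations of $\Phi_f$; they encode $\Phi_f$ essentially completely (up to a phase). Asserting that they coincide for $\Phi_f$ and $\Phi_g$ is tantamount to asserting $\Phi_f=\Phi_g$, which is what you are trying to prove. There is no mechanism by which the boundary homeomorphism $\varphi$ alone pins down the foliation data of the Hopf differential of an arbitrary extension; indeed, for general $L^1$ differentials on $\mathbb{D}$ the leaf-space construction itself is delicate (the paper notes this explicitly and avoids it by working only with polynomials on $\mathbb{C}$ and then passing to the $L^1$ closure). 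Moreover, the paper does not supply a general Plateau \emph{uniqueness} theorem in products of trees that you could invoke; it shows only that for polynomial differentials the leaf-space projection is a Plateau \emph{minimizer}, which is a different statement.

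The paper's actual route is an energy comparison that never puts $\Phi_f$ and $\Phi_g$ into a common tree. Given two minimal maps $(h_1,h_2)$ and $(g_1,g_2)$ with the same boundary, set $f_i=g_i^{-1}\circ h_i$; these are quasiconformal self-maps of $\mathbb{D}$ agreeing on $\partial\mathbb{D}$. The Reich--Strebel identity expresses $\sum_i \mathcal{E}(f_i(\mathbb{D}_r),g_i)-\sum_i\mathcal{E}(\mathbb{D}_r,h_i)$ in terms of $\phi_i=\phi(h_i)$ and the Beltrami forms $\mu_i$ of $f_i$. The pointwise strict inequality $e(h_i)>2|\phi_i|$ (valid because $h_i$ is an immersion into a surface) gives a uniform $\epsilon>0$ of slack, and the $L^1$ hypothesis lets one pass $r\to 1$. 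At that point the remaining expression is exactly the left-minus-right side of the new main inequality for $n=2$ (Theorem~B), which the tree/Plateau analysis---applied to polynomials and extended by density---shows is $\geq 0$. Running the same argument with $h$ and $g$ swapped yields two inequalities that together force $f_i=\mathrm{id}$. Thus trees enter only to prove the \emph{inequality} (\ref{newmain1}), not to compare the two candidate solutions directly; the $L^1$ assumption is used to justify the limit $r\to 1$ in the energy estimate, not to build global trees for $\Phi_f$.
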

 Theorem A has no curvature assumption, as in \cite{M1}, and no completeness either. We do not treat the question of existence, although we expect existence to hold for complete metrics with pinched negative curvature.  Existence results can be found in \cite{Bre} and \cite{Ta}.
\begin{remark}\label{Bers}
When the metric is complete and has pinched negative curvature, a harmonic map is quasiconformal, and hence admits quasisymmetric boundary extension, if and only if the Hopf differential has finite Bers norm \cite{Wan1}, \cite{TW}, which is weaker than integrability (see \cite[Chapter 5.4]{Hu}). So for $\sigma_1,\sigma_2$ hyperbolic, Theorem A is contained in Theorem \ref{BS}.
\end{remark}
\begin{remark}
    The quasisymmetric assumption in Theorem \ref{BS} can be removed. The proof of existence in \cite{BS} doesn't appear to use quasisymmetry, although uniqueness certainly does. More recent work of Seppi-Smith-Toulisse \cite{SST} implies that any homeomorphism of $\partial \mathbb{D}$ extends uniquely to a minimal diffeomorphism of the hyperbolic disk whose associated maximal surface in $\textrm{AdS}^3$ is complete. Trebeschi shows in \cite{Tre} that the required completeness assumption from \cite{SST} is automatic, thereby closing the uniqueness question.
\end{remark}
We also observe that at the chosen level of generality, a boundedness assumption like $L^1$-ness is necessary. 
\begin{exa}
    There exists a flat metric $g$ on $\mathbb{D}$ and two quasiconformal minimal diffeomorphisms of $(\mathbb{D},g)$ that extend to the same homeomorphisms of $\partial\mathbb{D}.$ One of them has $L^1$ Hopf differential, while the other does not.
\end{exa}
So, interestingly, our assumption is a trade-off: by Remark \ref{Bers} it is restrictive in hyperbolic space, but necessary in our general setting. Without such an assumption, our method still proves that minimal diffeomorphisms are stable (see Definition \ref{stabilitydef}). 
\begin{thma'}
    Let $\sigma_1,\sigma_2$ be a pair of Riemannian metrics on the unit disk. Any minimal diffeomorphism $(\mathbb{D},\sigma_1)\to(\mathbb{D},\sigma_2)$ is stable.
\end{thma'}

The geometric meaning of the $L^1$ assumption will be explained in the coming subsections. For now, it  allows us to invoke the new main inequality, introduced in \cite{M1} and \cite{M2} and studied further in \cite{MS}.
\subsection{The new main inequality}
Let $S$ be a Riemann surface, $\phi_1,\dots,\phi_n$ integrable holomorphic quadratic differentials on $S$ summing to zero, and $f_1,\dots, f_n:S\to S'$ mutually homotopic quasiconformal maps to another Riemann surface with Beltrami forms $\mu_1,\dots,\mu_n$. If $\partial S$ is non-empty, we ask that $f_1,\dots, f_n$ are mutually homotopic relative to $\partial S$.
\begin{defn}
    We say that the new main inequality holds if: 
 \begin{equation}\label{newmain1}
   \textrm{Re}\sum_{i=1}^n \int_S \phi_i\cdot \frac{ \mu_i}{1-|\mu_i|^2} \leq \sum_{i=1}^n \int_S |\phi_i|\cdot \frac{|\mu_i|^2}{1-|\mu_i|^2}.
 \end{equation}
\end{defn}
Taking $n=2$, $f_1$ (relatively) homotopic to the identity, and $f_2$ the identity, (\ref{newmain1}) becomes the classical main inequality for quasiconformal maps, a key ingredient in the proof of Teichm{\"u}ller's uniqueness theorem. On closed surfaces, (\ref{newmain1}) always holds for $n=1,2$ \cite{M1}, but fails for $n\geq 3$ \cite{M2}. We prove
\begin{thmb}
    For $n=2$ and $S=\mathbb{D},$ the new main inequality always holds. 
\end{thmb}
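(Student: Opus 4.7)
Write $\phi := \phi_1 = -\phi_2$ and let $\mu_i$ denote the Beltrami form of $f_i$. After post-composing with a Riemann map we may assume $S' = \mathbb{D}$ with the standard complex structure, as this changes neither the $\mu_i$ nor the boundary values of the $f_i$.

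My plan is to reduce to the classical Reich--Strebel main inequality by considering the composition $g := f_2 \circ f_1^{-1} : \mathbb{D} \to \mathbb{D}$. Since $f_1, f_2$ are mutually homotopic rel $\partial\mathbb{D}$, the map $g$ is quasiconformal and homotopic to the identity rel $\partial\mathbb{D}$, and Reich--Strebel applies: for any integrable holomorphic quadratic differential $\psi$ on the target,
\[
\textrm{Re}\int_{\mathbb{D}} \psi \cdot \frac{\mu_g}{1-|\mu_g|^2} \leq \int_{\mathbb{D}} |\psi| \cdot \frac{|\mu_g|^2}{1-|\mu_g|^2}.
\]
The composition formula for Beltrami coefficients together with a direct calculation yields
\[
\mu_g \circ f_1 \cdot \frac{\overline{\partial f_1}}{\partial f_1} = \frac{\mu_2 - \mu_1}{1 - \overline{\mu_1}\mu_2}, \qquad 1 - |\mu_g \circ f_1|^2 = \frac{(1-|\mu_1|^2)(1-|\mu_2|^2)}{|1-\overline{\mu_1}\mu_2|^2}.
\]
Pulling Reich--Strebel back via $f_1$, with area element transforming as $|dw|^2 = |\partial f_1|^2(1-|\mu_1|^2)|dz|^2$, both sides should rearrange into integrals on $\mathbb{D}$ in $\mu_1, \mu_2$ and the pullback of $\psi$.

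The main obstacle is to choose $\psi$ so that the pulled-back inequality matches the new main inequality for the prescribed $\phi$. The natural choice $\psi(w) := \phi(f_1^{-1}(w))/(\partial f_1)^2(f_1^{-1}(w))$ (the pushforward of $\phi$ if $f_1$ were conformal) is not holomorphic when $f_1$ is only quasiconformal, so Reich--Strebel does not apply to it directly. To bypass this, I would adapt the length-area proof of Reich--Strebel directly to the pair $(f_1, f_2)$, following Markovi{\'c}'s $n=2$ argument on closed surfaces \cite{M1}: for almost every horizontal arc $\gamma$ of $\phi$ joining two points of $\partial\mathbb{D}$, the curves $f_1(\gamma)$ and $f_2(\gamma)$ share endpoints (since $f_1|_{\partial\mathbb{D}} = f_2|_{\partial\mathbb{D}}$), and a length-area comparison along $\gamma$, integrated against the transverse vertical measure of $\phi$, yields the desired pointwise contribution. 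The $L^1$ hypothesis on $\phi$ makes Fubini applicable and ensures the total transverse measure is finite, so the integrated version gives the inequality.
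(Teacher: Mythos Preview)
Your proposal has a genuine gap. You correctly abandon the first approach: the pushforward of $\phi$ by a merely quasiconformal $f_1$ is not holomorphic, so classical Reich--Strebel cannot be applied to it.

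The fallback length--area argument, however, does not work as described. First, the citation is off: as the paper states explicitly, in \cite{M1} the $n=2$ inequality on closed surfaces is deduced \emph{from} the already-known uniqueness of minimal Lagrangians between closed hyperbolic surfaces, not by any length--area method. There is no two-map length--area proof there to transplant. Second, and more seriously, the mechanism itself breaks. In the classical proof one compares a horizontal arc $\gamma$ with $f(\gamma)$, both living in the disk where $\phi$ is defined; the shared endpoints pin down the horizontal $\phi$-displacement, and one bounds the $|\phi|^{1/2}$-length of $f(\gamma)$ from below. In your setup $f_1(\gamma)$ and $f_2(\gamma)$ land in the \emph{target} $\mathbb{D}$, which carries no quadratic differential; there is nothing to measure their lengths against, and ``sharing endpoints'' gives no inequality. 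Pulling one curve back by $f_2^{-1}$ just reproduces the composition $f_2^{-1}\circ f_1$ and the obstruction you already flagged. The sentence ``a length--area comparison along $\gamma$ \dots\ yields the desired pointwise contribution'' is precisely the step that is missing and for which no mechanism is offered.

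The paper's proof is of a different nature. Via the Reich--Strebel \emph{formula} (not inequality), the new main inequality for $\phi,-\phi$ is recast as the statement that the product of leaf-space projections $\pi=(\pi_1,\pi_2):\overline{\mathbb{D}}\to T_1\times T_2$ minimizes energy among maps with the same boundary circle (a Plateau problem in a product of $\mathbb{R}$-trees). For polynomial $\phi$ the trees are simplicial; a Lipschitz retraction $T_1\times T_2\to\pi(\mathbb{C})$, obtained from Guirardel's core theory, reduces this to the trivial planar Plateau problem, which $\pi$ solves since it is conformal and energy dominates area. Density of polynomials in $A^1(\mathbb{D})$ then gives the general case. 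The boundary agreement of $f_1,f_2$ enters not through trajectory endpoints but through the fact that the competitor $(\pi_1\circ f_1^{-1},\pi_2\circ f_2^{-1})$ has the same boundary circle as $\pi$.
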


\subsection{Outline of the argument}
The proof of Theorem A follows the methodology of a series of papers on the uniqueness and non-uniqueness of minimal surfaces in products of closed surfaces \cite{M1}, \cite{M2}, \cite{MSS}. These papers led to the paper \cite{SS}, which resolved a conjecture about minimal surfaces in more general locally symmetric spaces.

In \cite{M1}, Markovi{\'c} proves uniqueness of minimal diffeomorphisms using the new main inequality for $n=2.$ The main difference between our argument and that of \cite{M1} is the proof of the new main inequality. In \cite{M1}, (\ref{newmain1}) is proved as a consequence of the known uniqueness of minimal Lagrangians between closed hyperbolic surfaces. Our proof, on the other hand, does not assume any other uniqueness result. One of our motivations for writing this paper is to share what we think is the most natural proof.

\subsubsection{Theorem A} There is an established theory of harmonic maps from Riemannian manifolds to complete and non-positively curved (NPC) length spaces, such as $\R$-trees. A holomorphic quadratic differential on a Riemann surface gives rise to an $\R$-tree and a harmonic map from (a cover of) the Riemann surface to that $\R$-tree called a leaf-space projection. 

Given a minimal diffeomorphism, we get a minimal map $h$ into a product of two disks. As well, the Hopf differentials determine a minimal map $\pi$ into a product of two $\R$-trees. The $L^1$ assumption is equivalent to demanding that the image of $\pi$ has finite total energy (see the definition in Section 2.1). If $\mathbb{D}_r=\{z\in \mathbb{C}: |z|<r\}$, we show that the difference between the energy of $h|_{\mathbb{D}_r}$ and that of any other map with the same asymptotic boundary data is strictly bounded below by the difference in energies between $\pi$ and some other map to the same product of $\R$-trees (we're lying a little bit to keep things simpler here). Theorem B implies that the total energy of $\pi$ is less than that other map. We deduce that for $r$ large, any such $h$ strictly minimizes the energy over $\mathbb{D}_r$, and hence $h$ is unique.

\subsubsection{Theorem B} To prove Theorem B, we set up a Plateau problem in a product of $\R$-trees, asking for maps from $\overline{\mathbb{D}}$ that take $\partial\mathbb{D}$ onto a fixed embedded circle and minimize a certain energy or area. We will observe that if a product of leaf-space projections solves the Plateau problem, then (\ref{newmain1}) holds for their associated quadratic differentials. Thus, Theorem B amounts to studying this Plateau problem for $n=2.$

Let $\mathbb{A}^1(\mathbb{D})$ be the Banach space of integrable holomorphic quadratic differentials on the unit disk equipped with the $L^1$ norm. Since (\ref{newmain1}) is a closed inequality, it is routine to prove
 \begin{prop}\label{densityarg}
     Let $B\subset \mathbb{A}^1(\mathbb{D})$ be a dense subset, and suppose that for all quasiconformal maps that agree on $\partial\mathbb{D},$ (\ref{newmain1}) holds for every $\phi\in B.$ Then (\ref{newmain1}) holds for all $\phi\in \mathbb{A}^1(\mathbb{D}).$
 \end{prop}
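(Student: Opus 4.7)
The plan is to show that, for any fixed pair of quasiconformal maps, both sides of (\ref{newmain1}) depend continuously on the quadratic differential in the $L^1$ topology on $\mathbb{A}^1(\mathbb{D})$. Since (\ref{newmain1}) is a non-strict inequality between continuous functionals, the set of $\phi$ satisfying it will be closed in $\mathbb{A}^1(\mathbb{D})$, and the density of $B$ will immediately force the inequality to hold for every $\phi \in \mathbb{A}^1(\mathbb{D})$.

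First I would fix a pair of quasiconformal maps $f_1, f_2$ agreeing on $\partial \mathbb{D}$, with Beltrami forms $\mu_1, \mu_2$ satisfying $k_i := \|\mu_i\|_\infty < 1$. Quasiconformality is exactly what guarantees that the weights $\frac{\mu_i}{1-|\mu_i|^2}$ and $\frac{|\mu_i|^2}{1-|\mu_i|^2}$ are uniformly bounded on $\mathbb{D}$. Using these bounds I would derive $L^1$-Lipschitz estimates for each integral appearing in (\ref{newmain1}) as a functional of the differential: the left-hand side is linear in $\phi$ paired against a bounded function, while the right-hand side is sublinear in $|\phi|$ paired against a bounded function, and the pointwise inequality $\bigl||\phi| - |\phi'|\bigr| \leq |\phi - \phi'|$ reduces this sublinear term to a linear $L^1$ estimate as well. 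For $n=2$, the sum-zero constraint simply says $\phi_2 = -\phi_1$, so a single parameter varies and an approximation of $\phi_1$ in $B$ automatically produces an approximation of $\phi_2$.

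Given these continuity estimates the conclusion is immediate: for any $\phi \in \mathbb{A}^1(\mathbb{D})$, select $\phi^{(k)} \in B$ with $\phi^{(k)} \to \phi$ in $L^1$, apply the inequality to each $\phi^{(k)}$ by hypothesis, and pass to the limit. I do not anticipate any substantive obstacle; the argument depends only on the $L^\infty$ bound on the $\mu_i$ coming from quasiconformality together with elementary density, which is why the paper labels the proposition ``routine''.
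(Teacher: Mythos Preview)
Your proposal is correct and follows essentially the same approach as the paper: fix the quasiconformal maps, use the $L^\infty$ bound on the Beltrami forms to see that both sides of (\ref{newmain1}) are $L^1$-continuous in $\phi$ (the paper writes out the explicit constants $\tfrac{k}{1-k^2}$ and $\tfrac{2k^2}{1-k^2}$, using exactly the pointwise estimate $\bigl||\phi|-|\phi'|\bigr|\leq |\phi-\phi'|$ you mention), then approximate and pass to the limit. Your identification of the $n=2$ reduction $\phi_2=-\phi_1$ matches how the paper applies the proposition.
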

 Thus, while $\R$-trees can be quite wild (see the images in \cite{ATW}), we only need to study the Plateau problem for $\R$-trees arising from a generic class in $\mathbb{A}^1(\mathbb{D})$. Choosing our quadratic differentials to be polynomials, the product of $\R$-trees is a $2$-dimensional simplicial complex, and retracts onto the image of the leaf-space projections. Then the Plateau problem becomes nearly identical to the (more or less trivial) Plateau problem in the plane.

\subsection{Future perspectives}
In view of Theorem \ref{BS} and Remark \ref{Bers}, it's natural to pose the following problem, which we'll say more about in Section \ref{counter}.
\begin{prob}
    For $\sigma_1$ and $\sigma_2$ complete and with pinched negative curvature, prove uniqueness of minimal diffeomorphisms $(\mathbb{D},\sigma_1)\to (\mathbb{D},\sigma_2)$ with quasisymmetric boundary data.
\end{prob}
In another direction, the ideas here might be applicable for studying minimal disks in rank $2$ symmetric spaces. For all of the rank $2$ Lie groups that admit higher Teichm{\"u}ller spaces, except the split real form of $G_2,$ various authors have established existence and uniqueness results for quasi-isometric minimal disks in associated pseudo-Riemannian homogeneous spaces (see \cite{BH}, \cite{LTW}, and \cite{LT}). Through some form of a twistor correspondence, akin to using $\textrm{AdS}^3$ to prove Theorem \ref{BS}, these results imply existence and uniqueness results for minimal surfaces in symmetric spaces. One could try to push further the ideas from this paper to give a unified set of uniqueness statements, without passing through pseudo-Riemannian geometry. In the symmetric space context, products of trees are replaced by rank $2$ Euclidean buildings modeled on Cartan subalgebras.

The $L^1$ condition in fact generalizes nicely. As explained in \cite{SS} (for closed surfaces), a minimal map to a symmetric space of rank $k$ gives rise to a minimal map to $\mathbb{R}^k$ with special properties, in a rather explicit way. For $\mathbb{D}^2$, the construction is less complicated and we can explain it here. Given a minimal diffeomorphism from $\mathbb{D}$ to $\mathbb{D}$ (with any metrics), assume that the Hopf differential $\phi$ is the square of an abelian differential $\alpha$. Then the differentials $\alpha$ and $i\alpha$ determine Weierstrass-Enneper data for a minimal map to $\R^2$. The energy of this map, which dominates the area of the image, is equal to the $L^1$ norm of $\phi$. If $\phi$ is not a square, there is a degree $2$ branched cover of $\mathbb{D}$ on which it lifts to one, and after passing to the universal cover of that branched cover, we can repeat the construction. Going back to symmetric spaces, one could look at minimal maps that give finite area maps to $\R^2$.

Finally, this paper also opens up some new discussions on harmonic maps from the disk to $\R$-trees that we would like to highlight (see Remarks \ref{non-convex} and \ref{filling}).

\subsection{Acknowledgements}
 I'd like to thank Peter Smillie and Enrico Trebeschi for helpful discussions related to this paper. I'm grateful to Pallavi Panda for making the figures. I'd also like to thank the anonymous referee for many helpful comments that improved the paper, especially the proof of Theorem B. This work was supported by Luxembourg National Research Fund (Fonds National de la Recherche, FNR) grant number O20/14766753, \it{Convex Surfaces in Hyperbolic Geometry.}

\end{section}

\section{Harmonic maps}
We give all definitions specialized to open subsets of $\mathbb{C}$. Throughout, let $z=x+iy$ be the standard coordinate on $\mathbb{C}.$ For this section, fix a simply connected domain $\Omega\subset \mathbb{C}$.
\subsection{Harmonic maps}
Let $(M,d)$ be a complete and NPC length space and $h:\Omega\to (M,d)$ a locally Lipschitz map. By work of Korevaar-Schoen \cite[Theorem 2.3.2]{KS}, we can associate a locally $L^1$ measurable metric $g(h)$, defined on pairs of Lipschitz vector fields. If $h$ is a $C^1$ map to a smooth manifold $M$ and the distance $d$ is induced by a Riemannian metric $\sigma$, then $g(h)$ is represented by the pullback metric $h^*\sigma$. The energy density form is the locally $L^1$ form 
\begin{equation}\label{en}
    e(h)=\frac{1}{2}(\textrm{trace} g(h))dx dy,
\end{equation}
where the trace is with respect to the flat metric on $\Omega.$ The total energy is $$\mathcal{E}(\Omega,h) = \int_\Omega e(h).$$
We want to consider boundary value problems and to allow $\mathcal{E}(\Omega,h)=\infty.$ Hence we make
\begin{defn}\label{harmdef}
$h$ is harmonic if for all relatively compact domains $U\subset \Omega$ with compact closure contained in $\Omega$, $h|_U$ is a critical point of $f\mapsto \mathcal{E}(U,f)$ with respect to variations compactly supported on $U.$
\end{defn}
There are many existence and uniqueness results that extend the Riemannian theory. We recall just one.
\begin{thm}[Theorem 2.2 in \cite{KS}]\label{KSthm}
    Let $U\subset \Omega$ be a relatively compact and Lipschitz domain with compact closure contained in $\Omega$. Given a Lipschitz map $f:\overline{U}\to (X,d)$, there exists a unique harmonic map $h:\overline{U}\to X$ such that $h=f$ on $\partial U$. Moreover, $h$ minimizes $\mathcal{E}(U,\cdot)$ relative to its boundary values.
\end{thm}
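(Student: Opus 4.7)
My plan is to follow the direct method of the calculus of variations, using the non-positive curvature of the target to obtain uniqueness, lower semicontinuity, and compactness essentially simultaneously. Let $\mathcal{C} = \{u : \overline{U}\to X : \mathcal{E}(U,u) < \infty \text{ and } u|_{\partial U} = f\}$. Since $f$ itself is Lipschitz and $\overline{U}$ is compact, $f \in \mathcal{C}$, so the class is nonempty and the infimum $E_* := \inf_{u \in \mathcal{C}} \mathcal{E}(U,u)$ is finite. Pick a minimizing sequence $u_n \in \mathcal{C}$.

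The key tool is the following NPC convexity inequality: since $X$ is NPC, any two points have a unique midpoint, so for $u_1, u_2 \in \mathcal{C}$ one can form the pointwise midpoint map $u_{1/2}(z) := \mathrm{mid}(u_1(z), u_2(z))$, which lies in $\mathcal{C}$, and the Korevaar-Schoen parallelogram identity for energy densities integrates to
\[
\mathcal{E}(U, u_{1/2}) \;\leq\; \tfrac{1}{2}\mathcal{E}(U, u_1) + \tfrac{1}{2}\mathcal{E}(U, u_2) \;-\; c\int_U |\nabla d(u_1,u_2)|^2\, dx\,dy
\]
for a positive constant $c$. Uniqueness is immediate: if $u_1, u_2$ both achieve $E_*$, the inequality forces $d(u_1, u_2)$ to be constant on $U$, and since it vanishes on $\partial U$, we get $u_1 = u_2$. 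Applied along the minimizing sequence, the same inequality together with a Poincar\'e inequality (using that $d(u_m, u_n) = 0$ on $\partial U$) shows that $\{u_n\}$ is Cauchy in $L^2$, hence converges to a limit $h : \overline{U} \to X$.

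It remains to show that the limit $h$ inherits the correct boundary values, that the energy is lower semicontinuous under $L^2$-convergence, and that $h$ is harmonic. The boundary condition passes to the limit via a trace theorem for finite-energy maps to NPC spaces, using the uniform Lipschitz bound on the traces $u_n|_{\partial U} = f$. Lower semicontinuity of $\mathcal{E}(U, \cdot)$ under $L^2$-convergence is obtained by writing $\mathcal{E}$ as a supremum of the Korevaar-Schoen approximating functionals $e^{\epsilon}(u)(z) := \epsilon^{-4}\int_{B(z,\epsilon)} d^2(u(z), u(w))\, dw$, each of which is continuous in $u$ on $L^2$, and applying Fatou. Thus $\mathcal{E}(U, h) \leq \liminf \mathcal{E}(U, u_n) = E_*$, so $h$ is a minimizer. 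Harmonicity in the sense of Definition \ref{harmdef} then follows because any compactly supported variation of $h$ inside $U$ preserves boundary values and therefore has energy at least $\mathcal{E}(U, h)$.

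The hard part, and the genuine technical content of \cite{KS}, is regularity: the variational $h$ is a priori only an $L^2$ map, whereas the statement requires an honest (locally Lipschitz) map so that the pointwise description and the critical-point condition make sense. Interior Lipschitz regularity is established via a monotonicity formula for the rescaled energy on small balls, analogous to the density monotonicity for minimal surfaces, together with a mean-value type inequality for the squared distance function bounded by total energy. Boundary regularity is more delicate and requires constructing comparison barriers out of the Lipschitz boundary datum $f$ to control oscillations near $\partial U$. I would treat these regularity estimates as a black box from \cite{KS}, since they are independent of the convexity-based existence and uniqueness argument above.
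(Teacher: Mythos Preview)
The paper does not give a proof of this statement: it is quoted as Theorem~2.2 of \cite{KS} and used as a black box. So there is no ``paper's own proof'' to compare against; the author simply imports the Korevaar--Schoen existence and uniqueness theorem from the literature.

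That said, your outline is a faithful sketch of the actual argument in \cite{KS}: the NPC midpoint (parallelogram) inequality for the energy is indeed the engine that gives both uniqueness and the $L^2$-Cauchy property of a minimizing sequence, lower semicontinuity is obtained from the $\epsilon$-approximating energies, and interior Lipschitz regularity is a separate (and harder) step. One small correction: the midpoint inequality as you state it only forces $d(u_1,u_2)$ to have vanishing gradient, hence to be constant on each connected component of $U$; you then need the trace condition to conclude the constant is zero, which you do invoke. Also, harmonicity in the sense of Definition~\ref{harmdef} asks for criticality, not just minimality; but for a global minimizer with respect to its own boundary values on every subdomain, criticality on compactly supported variations is immediate, so this is fine. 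Your honest flagging of the regularity theory as the substantive technical content is accurate.
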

Writing $x_1=x$ and $x_2=y$, let $g_{ij}(h)$ be the components of $g(h)$. 

\begin{defn}
    The Hopf differential of $h$ is the measurable quadratic differential given by
\begin{equation}\label{mhopf}
\phi(h)(z)=\frac{1}{4}(g_{11}(h)(z)-g_{22}(h)(z)-2ig_{12}(h)(z))dz^2.
\end{equation}
\end{defn}
When $h$ is harmonic, the Hopf differential is represented by a holomorphic quadratic differential. In the Riemannian setting, (\ref{mhopf}) is 
$$
\phi(h)(z) = h^*\sigma\Big (\frac{\partial}{\partial z},\frac{\partial}{\partial z}\Big )(z)dz^2,$$ and hence $h$ is weakly conformal if and only if $\phi(h)=0.$ Recall that the image of a harmonic and conformal immersion inside a Riemannian manifold defines a minimal surface. Thus, the following definition is appropriate for the singular setting.
\begin{defn}
   $h:\Omega\to (M,d)$ is minimal if it is harmonic and $\phi(h)=0.$ 
\end{defn}
Directly from the definitions (\ref{en}) and (\ref{mhopf}), the energy density and Hopf differential of a map into a product of spaces is the sum of the energies and Hopf differentials of the component maps. Consequently, a map into a product of disks $$h=(h_1,h_2):\Omega\to (\mathbb{D}^2,\sigma_1\oplus\sigma_2)$$ is harmonic if both components are harmonic, and minimal when $\phi(h_1)=-\phi(h_2).$ It should thus be clear that minimal diffeomorphisms yield minimal graphs in the product $4$-manifold.

\subsection{Harmonic maps to $\R$-trees}\label{dualtrees}
For an introduction to harmonic maps to $\R$-trees in the more usual context of closed surfaces, see \cite{Wf}.
    \begin{defn}
An $\mathbb{R}$-tree is a length space $(T,d)$ such that any two points are connected by a unique arc, and every arc is a geodesic, isometric to a segment in $\mathbb{R}$.
\end{defn}
Let $\phi$ be a holomorphic quadratic differential on $\mathbb{C}$. The vertical (resp. horizontal) foliation of $\phi$ is the singular foliation on $\mathbb{C}$ whose non-singular leaves are the integral curves of the line field on $\Omega\backslash \phi^{-1}(0)$ on which $\phi$ returns a negative (resp. positive) real number. The singularities are standard prongs at the zeros. Both foliations come with transverse measures $|\textrm{Re}\sqrt{\phi}|$ and $|\textrm{Im}\sqrt{\phi}|$  respectively (see \cite[Expos{\'e} 5]{Thbook} for precise definitions).  

 We define an equivalence relation on $\mathbb{C}$ under which two points $x,y\in\mathbb{C}$ are identified if they lie on the same leaf of the vertical foliation of $\phi$. We denote the quotient by $T$, and the quotient projection as $\pi: \mathbb{C}\to T$. The transverse measure pushes down via $\pi$ to a distance function $d$ that makes $(T,d)$ into an $\R$-tree. The same construction applies for the horizontal foliation, which is also the vertical foliation of $-\phi$. We work with the leaf space for the vertical foliation, unless specified otherwise.

The energy density and Hopf differential of $\pi$ can be described explicitly: at a point $p\in\mathbb{C}$ on which $\phi(p)\neq 0$, $\pi$ locally isometrically factors through a segment in $\mathbb{R}$. In a small neighbourhood around that point, $g(h)$ is represented by the pullback metric of the locally defined map to $\mathbb{R}$. Therefore, it can be computed that
\begin{equation}\label{enho}
    e(\pi)=|\phi|/2, \hspace{1mm} \phi(\pi)=\phi/4.
\end{equation}
In view of (\ref{enho}), we will always rescale the metric on $T$ from $(T,d)$ to $(T,2d).$ 

Now, recall that we've been working on a simply connected domain $\Omega\subset \mathbb{C}.$ We add the assumption that $\Omega$ is bounded and has Lipschitz boundary, and restrict $\pi$ from $\mathbb{C}$ to $\overline{\Omega}\subset \mathbb{C}$. In our normalization, by the first formula in (\ref{enho}) the total energy of $\pi$ on $\Omega$ is 
$$\mathcal{E}(\Omega,\pi)=\int_\Omega |\phi|<\infty,$$
the $L^1(\Omega)$-norm of $\phi.$ We will think of $\pi: \overline{\Omega}\to (T,2d)$ as solving a boundary value problem, which is justified by the proposition below.
\begin{prop}
    $\pi: \overline{\Omega}\to (T,2d)$ is harmonic.
\end{prop}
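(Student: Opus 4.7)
The plan is to verify the critical-point condition of Definition~\ref{harmdef} by showing that $\pi$ is locally an energy minimizer. Since the target is NPC, minimization is equivalent to criticality, and Theorem~\ref{KSthm} will reduce everything to showing that $\pi$ itself solves the Dirichlet problem on every relatively compact subdomain.

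\textbf{Harmonicity away from zeros.} Let $Z = \phi^{-1}(0)$; since $\phi$ is holomorphic this is a discrete subset of $\Omega$. Near any $p \in \Omega \setminus Z$ I introduce natural coordinates $w = u + iv$ with $\phi = dw^2$. The vertical leaves are level sets of $u$, so on this chart $\pi = \iota \circ u$, where $u$ is classically harmonic and $\iota \colon J \to T$ is an isometric embedding of a segment $J \subset \mathbb{R}$. For any Lipschitz competitor $f$ on a smaller ball $V'$ (with compact closure in the chart) satisfying $f = \pi$ on $\partial V'$, I compose with the nearest-point retraction $r \colon T \to \iota(J)$. Since $T$ is $\CAT(0)$ and $\iota(J)$ is a geodesic (hence convex) subset, $r$ is $1$-Lipschitz, and since $f|_{\partial V'} = \pi|_{\partial V'} \subset \iota(J)$ the retraction fixes the boundary values. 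The pullback $\iota^{-1} \circ r \circ f$ is then a real-valued competitor for $u$, and the classical Dirichlet principle yields $\mathcal{E}(V', \pi) \leq \mathcal{E}(V', r \circ f) \leq \mathcal{E}(V', f)$.

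\textbf{Extension across $Z$.} For a relatively compact Lipschitz subdomain $U$ with compact closure in $\Omega$, Theorem~\ref{KSthm} produces the unique harmonic minimizer $h \colon \overline{U} \to T$ with $h = \pi$ on $\partial U$; it suffices to show $h = \pi$. The continuous function $\rho(p) = d(h(p), \pi(p))$ vanishes on $\partial U$ and is weakly subharmonic on $U \setminus Z$ by the standard Korevaar-Schoen convexity inequality for the distance between two harmonic maps into an NPC space (using the previous step to know $\pi$ is harmonic there). Since $Z$ is discrete, hence of vanishing $2$-capacity, bounded subharmonic functions extend across it, so the maximum principle still yields $\rho \equiv 0$ on $U$.

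\textbf{The hard part} is making the maximum principle across $Z$ precise. I would justify it either by a standard removability result for bounded subharmonic functions at polar sets, or by an exhaustion argument: on $U_\epsilon = U \setminus \bigcup_{p \in Z \cap U} \overline{B_\epsilon(p)}$ the function $\rho$ attains its maximum on $\partial U_\epsilon$, and continuity of $\rho$ together with its vanishing on $\partial U$ forces $\sup_{U_\epsilon} \rho \to 0$ as $\epsilon \to 0$. A secondary point is to ensure in the first step that $V'$ is small enough that $\pi|_V$ genuinely factors through $\iota$, which is automatic in natural coordinates.
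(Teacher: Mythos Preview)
Your argument is sound and follows a genuinely different path from the paper's. The paper never isolates the zero set: it invokes \cite[Proposition~3.2]{FW} to say that the Korevaar--Schoen minimizer $h$ pulls back germs of convex functions to subharmonic germs, and invokes the local computation in \cite[\S4]{Wf} to say that the leaf-space projection $\pi$ has the \emph{same} convex-to-subharmonic property directly, including at the zeros of $\phi$, without first proving $\pi$ harmonic anywhere. Since the distance function on $T\times T$ is convex, $p\mapsto d(h(p),\pi(p))$ is then subharmonic on all of $\Omega$ in one stroke, and the maximum principle finishes. Your route trades those two citations for the elementary natural-coordinate reduction plus a removability step; it is more self-contained, at the cost of handling $Z$ separately.

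One genuine gap: your exhaustion alternative does not work as written. The boundary $\partial U_\epsilon$ contains the spheres $\partial B_\epsilon(p)$ for $p\in Z\cap U$, and by continuity $\sup_{\partial B_\epsilon(p)}\rho\to \rho(p)$ as $\epsilon\to 0$; you have no a priori reason to know $\rho(p)=0$. So the claim $\sup_{U_\epsilon}\rho\to 0$ is unjustified---the argument only shows that if $\sup_U\rho>0$ then the supremum is attained on $Z$, which is not yet a contradiction. Your first option, the classical removability of isolated (polar) singularities for bounded subharmonic functions, is the correct route; keep it and drop the exhaustion variant. (Boundedness of $\rho$ follows since $h(\overline U)$ lies in the convex hull of $\pi(\partial U)$, by the NPC maximum principle.)
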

\begin{proof}
From (\ref{enho}), $\pi|_{\overline{\Omega}}$ is Lipschitz. Thus, we can appeal to the existence result of Korevaar-Schoen, Theorem \ref{KSthm}, which produces a harmonic map $h:\overline{\Omega}\to (T,2d)$ that agrees with $\pi$ on $\partial\Omega$. It is proved in \cite[Proposition 3.2]{FW} that harmonic maps to $\R$-trees pull back germs of convex functions to germs of subharmonic functions. On the other hand, the local argument from \cite[Section 4]{Wf} shows that $\pi$ pulls back germs of convex functions on $(T,2d)$ to germs of subharmonic functions. As the distance function $d$ is convex, the map $p\mapsto d(h(p),\pi(p))$ is subharmonic on $\Omega.$ Since $d(h(p),\pi(p))=0$ on $\partial \Omega,$ we deduce that $h=\pi.$
\end{proof}

Later on, we will add a condition that will simplify the proofs: that $\phi$ is a polynomial. Through basic geometric considerations, the following should be clear. Details are explained in \cite[Proposition 2.2]{AW}.
 \begin{prop}\label{auwan}
   When $\phi$ is a polynomial of degree $n,$ the leaf spaces of the vertical and horizontal foliations are complete simplicial $\R$-trees with $n+2$ infinite edges, which can be properly and geodesically embedded in $\R^2.$
\end{prop}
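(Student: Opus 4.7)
The plan is to reduce the proposition to explicit analysis of the singular flat structure $|\phi|$ on $\mathbb{C}$ coming from the degree-$n$ polynomial $\phi$. First I would record the two local models. Near a zero of $\phi$ of order $k$, the natural coordinate $\zeta = \int \sqrt{\phi}$ presents the vertical foliation as the standard $(k+2)$-pronged singularity. Near infinity, using $\phi = (z^n + \text{lower order})\,dz^2$ and integrating, the natural coordinate behaves like $\zeta \sim \tfrac{2}{n+2}\, z^{(n+2)/2}$, so that infinity is a cone point of cone angle $(n+2)\pi$. The vertical foliation therefore decomposes near infinity into $n+2$ ``half-plane'' sectors, each swept out by vertical leaves escaping monotonically to infinity.

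Second, I would study the global critical graph $\Gamma \subset \mathbb{C}$ formed by the vertical separatrices emanating from the zeros. Each zero of order $k$ emits $k+2$ separatrices, and each separatrix either terminates at another zero after finite $|\phi|$-length or escapes to one of the $n+2$ ends. The absence of closed vertical leaves (and hence of cylindrical components) follows because any such leaf would bound a disk in the simply connected surface $\mathbb{C}$, and a standard argument using the holomorphicity of $\phi$ on that disk rules this out. Consequently $\Gamma$ is a finite graph, and each connected component of $\mathbb{C}\setminus \Gamma$ is isometrically a half-plane or infinite strip in natural coordinates, foliated by parallel vertical leaves joining two of the $n+2$ asymptotic sectors (possibly the same one to itself).

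From this the tree structure can be read off directly. Collapsing vertical leaves turns each regular component of $\mathbb{C}\setminus\Gamma$ into an open interval, each zero of order $k$ into a vertex of valence $k+2$, and each of the $n+2$ asymptotic sectors into an infinite ray; gluing yields a finite simplicial $\mathbb{R}$-tree $T$ with exactly $n+2$ infinite edges, complete because divergent sequences in $T$ lift to sequences escaping to infinity along some sector. For the planar embedding, I would construct a transversal by selecting one horizontal subarc in each regular component and gluing these across zeros, producing an embedded simplicial tree $\widetilde T \subset \mathbb{R}^2$ that the projection $\pi$ sends homeomorphically and isometrically onto $T$; properness is immediate since each infinite edge of $\widetilde T$ is a ray going to infinity in $\mathbb{C}$. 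The main obstacle is the global structural claim in the second step: ruling out recurrent leaves and verifying that $\Gamma$ is finite. Once this is established via the cone-angle description of infinity together with simple connectivity of $\mathbb{C}$, everything else reduces to combinatorial bookkeeping of prongs at zeros and asymptotic sectors at infinity.
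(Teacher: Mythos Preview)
The paper does not actually prove this proposition: it states that the result ``should be clear'' from basic geometric considerations and defers to \cite[Proposition 2.2]{AW} for details. So your proposal is not competing with any argument in the paper; rather, it is a sketch of the standard proof that the cited reference (and Strebel's trajectory-structure theory behind it) carries out: local models at zeros and at the order-$(n+4)$ pole at infinity, decomposition of $\mathbb{C}$ along the critical vertical graph into half-planes and strips, and then reading off the simplicial tree from that decomposition. In that sense your approach is correct and is exactly the intended one.

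Two small comments. First, your justification of completeness is phrased backwards: divergent sequences escaping to infinity is a properness statement, not a completeness one. Completeness is immediate once you know $T$ is a finite simplicial $\R$-tree whose infinite edges are closed rays. Second, for the planar embedding you do not need to build a horizontal transversal inside $\mathbb{C}$; once $T$ is known to be a finite simplicial tree with $n+2$ infinite edges, it embeds properly and geodesically in $\R^2$ simply by drawing its edges as straight Euclidean segments and rays with suitably chosen angles at each vertex. Your transversal construction is not wrong, just more than is needed. Finally, your identification of the genuine obstacle is accurate: the content lies in ruling out recurrent trajectories and showing every separatrix either hits a zero or escapes to one of the $n+2$ sectors at infinity. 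Your closed-leaf argument handles periodic trajectories but not recurrence in general; for that one really uses the explicit half-plane structure near the pole at infinity, which forces every vertical leaf to eventually enter a sector and escape. That is precisely what the reference \cite{AW} supplies.
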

See Figure \ref{fig:fig1}. In fact, the converse to Proposition \ref{auwan} is the subject of the paper \cite{AW}.

 \begin{figure}[ht]
     \centering
     \includegraphics[scale=0.25]{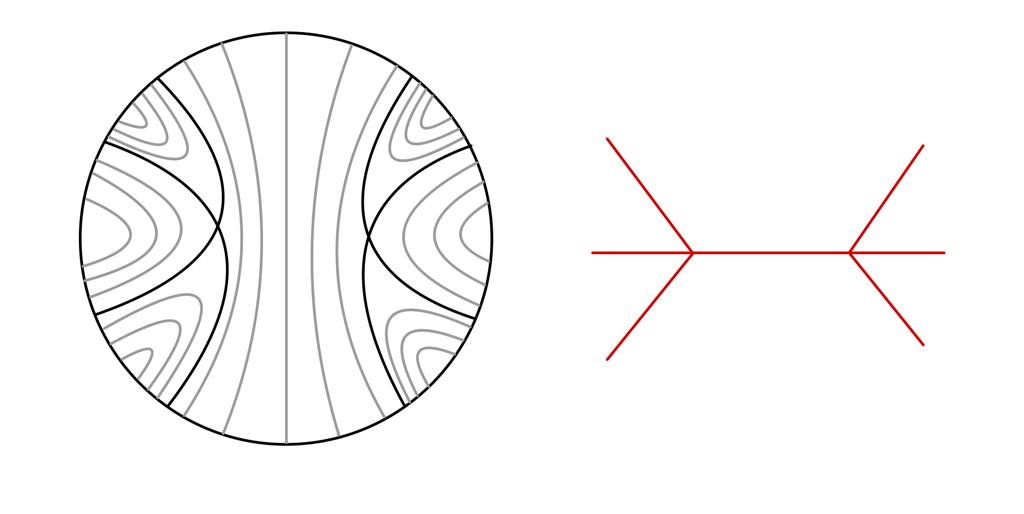}
    \caption{The foliation and $\R$-tree of a polynomial with two double order zeros, restricted to $\overline{\mathbb{D}}$.}
     \label{fig:fig1}
 \end{figure} 

\begin{remark}\label{non-convex}
    Since our theorem concerns $\phi\in \mathbb{A}^1(\mathbb{D}),$ it might appear strange to begin with a quadratic differential on $\mathbb{C}.$
    Our main reason for making this choice is that the $\R$-tree construction requires more care on the disk, and it's unnecessarily complicated to define a Plateau problem as below for quadratic differentials that don't extend holomorphically past $\partial\mathbb{D}.$ It would be interesting to develop the theory of harmonic maps from $\mathbb{D}$ to $\R$-trees more thoroughly.
\end{remark}

\section{Proofs}
\subsection{The Reich-Strebel formula}
The Reich-Strebel formula (Proposition \ref{RSorig} and originally equation 1.1 in \cite{RS}) will play an important role in the main proofs. 

As above, let $\Omega,\Omega'\subset \mathbb{C}$ be open.
Recall that the Beltrami form of a locally quasiconformal map $f:\Omega\to\Omega'\subset\mathbb{C}$ is the measurable tensor $$\mu(z)=\frac{f_{\overline{z}}(z)d\overline{z}}{f_z(z)dz},$$ where derivatives are taken in the weak sense. 
Let $(M,d)$ be a complete NPC space, and $h:\Omega\to (M,d)$ a locally Lipschitz map with finite total energy. Let $\mu_f$ be the Beltrami form of $f$, $J_{f^{-1}}$ the Jacobian of $f^{-1}$, and $\phi$ the Hopf differential of $h$, which need not be holomorphic. One can verify the identity 
\begin{align*}
     e(h\circ f^{-1})&=(e(h)\circ f^{-1})J_{f^{-1}}+2(e(h)\circ f^{-1})J_{f^{-1}} \frac{(|\mu_f|^2\circ f^{-1})}{1-(|\mu_f|^2\circ f^{-1})} \\
     &-4\textrm{Re}\Big ( (\phi(h)\circ f^{-1})J_{f^{-1}}\frac{(\mu_f\circ f^{-1})}{1-(|\mu_f|^2\circ f^{-1})}\Big )
\end{align*}
from \cite{RS}. Integrating, we obtain the proposition below.
\begin{prop} We have the formula
\begin{equation}\label{RSorig}
    \mathcal{E}(\Omega',h\circ f^{-1}) -\mathcal{E}(\Omega,h) =  \int_\Omega -4\textrm{Re} \Big (\phi(h)\cdot \frac{ \mu_f}{1-|\mu_f|^2} \Big )+ 2 e(h)\cdot \frac{|\mu_f|^2}{1-|\mu_f|^2}.
\end{equation}
\end{prop}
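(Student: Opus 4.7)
The statement follows by integrating the displayed pointwise identity for $e(h\circ f^{-1})$ and performing a change of variables. The plan is to treat the pointwise identity as the main technical content; once it is verified, the integrated formula (\ref{RSorig}) follows almost formally.

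To verify the pointwise identity, I would first work in the smooth Riemannian case. Let $g=h\circ f^{-1}$ and let $w$ denote the coordinate on $\Omega'$. Differentiating $f\circ f^{-1}=\mathrm{id}$ solves for the derivatives of $f^{-1}$:
$$(f^{-1})_w = \frac{1}{(1-|\mu_f|^2)\,f_z}\circ f^{-1},\qquad (f^{-1})_{\bar w} = -\frac{\mu_f}{(1-|\mu_f|^2)\,\overline{f_z}}\circ f^{-1}.$$
By the chain rule, $g_w$ and $g_{\bar w}$ are then explicit linear combinations of $h_z\circ f^{-1}$ and $h_{\bar z}\circ f^{-1}$ with coefficients involving only $\mu_f$ and $f_z$. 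Substituting these into the formulas expressing $e(g)$ and $\phi(g)$ as the bilinear pairings of the target metric with $g_w$ and $g_{\bar w}$, the factor $1/|f_z|^2$ combines with $1/(1-|\mu_f|^2)$ to produce $J_{f^{-1}}$ times $1-|\mu_f|^2\circ f^{-1}$; diagonal pairings reproduce $e(h)\circ f^{-1}$, while cross terms collect into the $\mathrm{Re}(\phi(h)\circ f^{-1}\cdot\mu_f\circ f^{-1})$ term. Careful bookkeeping completes the pointwise identity.

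For the integration step, I would integrate the pointwise identity over $\Omega'$, yielding $\mathcal{E}(\Omega',h\circ f^{-1})$ on the left-hand side. On the right-hand side I would apply the change of variables $w=f(z)$, whose Jacobian cancels each factor $J_{f^{-1}}\circ f$, turning every term into an integral over $\Omega$ with no composition by $f^{-1}$ and no Jacobian. The first term then contributes $\mathcal{E}(\Omega,h)$, which I move to the left-hand side of (\ref{RSorig}); the other two match its right-hand side verbatim.

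The main obstacle is the NPC/singular setting of the proposition: $h$ is only locally Lipschitz into a metric space $(M,d)$, so $h_z$ and $h_{\bar z}$ do not exist as tangent vectors. I would address this by working entirely with the Korevaar-Schoen pullback tensor $g(h)$ and the measurable Hopf differential $\phi(h)$ from (\ref{mhopf}). The crucial input is that $g(h)$ transforms tensorially under quasiconformal precomposition, i.e.\ $g(h\circ f^{-1})=(f^{-1})^*g(h)$ almost everywhere, a property essentially proved in \cite{KS} and underlying the original Reich-Strebel derivation in \cite{RS}. Granting this, the linear algebraic computation above goes through at the level of the measurable tensors $g$ and $\phi$, and the integration argument is unchanged.
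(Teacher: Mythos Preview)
Your proposal is correct and follows the same approach as the paper: the paper simply states the pointwise identity for $e(h\circ f^{-1})$ (citing \cite{RS}) and then says ``Integrating, we obtain the proposition below,'' which is exactly your plan. You provide more detail on both the chain-rule computation and the passage to the NPC setting via the Korevaar--Schoen tensor, but the structure of the argument is identical.
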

When the target $(M,d)$ is an $\R$-tree, we replace $e(h)$ by $2|\phi(h)|,$ so the formula (\ref{RSorig}) involves only $\phi$ and $\mu$. 

\subsection{The Plateau problem in a product of trees}\label{plateausection}
See \cite{Nit} or \cite[Chapter 4]{CM} for exposition on the classical Plateau problem about minimal surfaces in $\R^n$ (among many other excellent sources).

For $n\geq 2$ and $i=1,\dots, n$, let $(T_i,d_i)$ be a geodesically complete $\R$-tree and let $(X,d)$ be the product $(X,d)=\prod_{i=1}^n (T_i,d_i).$ Let $\gamma\subset (X,d)$ be an embedded circle.
\begin{defn}
    Let $h:\overline{\mathbb{D}}\to (X,d)$ be a locally Lipschitz map such that $h(\partial\mathbb{D})=\gamma$. We say that $h$ solves the Plateau problem for $\gamma$ if for all Lipschitz maps $g:\overline{\mathbb{D}}\to (X,d)$ such that $g|_{\partial\mathbb{D}}$ differs from $h$ by a reparametrization of $\partial\mathbb{D},$ $$\mathcal{E}(\mathbb{D},h)\leq \mathcal{E}(\mathbb{D},g).$$
\end{defn}
\begin{remark}
    If $h$ solves the Plateau problem, then it should surely minimize the area among a larger class of maps. We keep such a strong condition on $g$ in order to give a smoother proof.
\end{remark}

The proposition below explains the relation between our Plateau problem and the new main inequality.
\begin{prop}\label{newmaincrit}
Suppose that $h=(h_1,\dots,h_n):\overline{\mathbb{D}}\to (X,d)$ solves the Plateau problem for $\gamma$. For $i=1,\dots, n$, let $\phi_i$ be the Hopf differential of $h_i$. Then, for any choice of quasiconformal maps $f_i:\mathbb{D}\to\mathbb{D}$ that all extend to the same map on $\partial \mathbb{D}$, (\ref{newmain1}) holds. 
\end{prop}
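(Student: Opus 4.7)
The plan is to combine the Reich-Strebel formula (\ref{RSorig}) componentwise with the Plateau-minimizing property of $h$. Since each $h_i$ takes values in the $\mathbb{R}$-tree $T_i$, which is locally isometric to an interval in $\mathbb{R}$, a direct computation in isothermal coordinates gives $e(h_i) = 2|\phi_i|$ almost everywhere (this is the pointwise version of the observation, made just after (\ref{RSorig}) and borne out by (\ref{enho}), that for $\mathbb{R}$-tree targets one replaces $e(h)$ by $2|\phi(h)|$). Substituting this into (\ref{RSorig}) applied to the pair $(h_i, f_i)$ yields
\[
\mathcal{E}(\mathbb{D}, h_i \circ f_i^{-1}) - \mathcal{E}(\mathbb{D}, h_i) = 4\int_\mathbb{D} |\phi_i|\cdot \frac{|\mu_i|^2}{1-|\mu_i|^2} - 4\,\textrm{Re}\int_\mathbb{D} \phi_i \cdot \frac{\mu_i}{1-|\mu_i|^2},
\]
where $\mu_i$ denotes the Beltrami form of $f_i$.

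Setting $g := (h_1 \circ f_1^{-1}, \ldots, h_n \circ f_n^{-1})$ and summing over $i$, the left-hand side becomes $\mathcal{E}(\mathbb{D}, g) - \mathcal{E}(\mathbb{D}, h)$ by additivity of energy across product factors, while the right-hand side is precisely $4$ times the defect in (\ref{newmain1}). Because the $f_i$ share a common boundary homeomorphism $\varphi$ on $\partial\mathbb{D}$, the trace $g|_{\partial\mathbb{D}} = h|_{\partial\mathbb{D}} \circ \varphi^{-1}$ is merely a reparametrization of $h|_{\partial\mathbb{D}}$, so $g$ is an admissible competitor in the Plateau problem whose solution is $h$. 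Applying the Plateau property then gives $\mathcal{E}(\mathbb{D}, g) \geq \mathcal{E}(\mathbb{D}, h)$, which upon dividing by $4$ is exactly the new main inequality.

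The main obstacle I anticipate is verifying Lipschitz regularity of $g$: although the $h_i$ are Lipschitz, the inverses $f_i^{-1}$ of general quasiconformal maps need not be, so $g$ may fail to be Lipschitz as required by the definition of the Plateau problem. To handle this I would first run the argument above in the case where the $f_i$ are quasiconformal diffeomorphisms, for which $g$ is automatically Lipschitz. The general case follows by a density argument: any quasiconformal $f_i$ with boundary extension $\varphi$ can be approximated by quasiconformal diffeomorphisms $f_i^{(k)}$ with the same boundary values, with $\|\mu_{f_i^{(k)}}\|_\infty \leq K < 1$ uniformly and $\mu_{f_i^{(k)}} \to \mu_{f_i}$ almost everywhere. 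Since the integrands $\mu/(1-|\mu|^2)$ and $|\mu|^2/(1-|\mu|^2)$ are continuous and bounded on $\{|\mu|\leq K\}$, dominated convergence passes (\ref{newmain1}) to the limit.
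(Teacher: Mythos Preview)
Your argument is essentially identical to the paper's: define the competitor $h'=(h_1\circ f_1^{-1},\dots,h_n\circ f_n^{-1})$, use that the common boundary extension makes $h'|_{\partial\mathbb{D}}$ a reparametrization of $h|_{\partial\mathbb{D}}$, apply the Plateau inequality, and plug in Reich--Strebel with $e(h_i)=2|\phi_i|$. The only difference is that you flag the Lipschitz regularity of the competitor and propose a density fix, a point the paper simply passes over; your concern is legitimate and the approximation route is reasonable, though in practice one could equally well note that the Reich--Strebel identity already shows $h'$ has finite energy and work with finite-energy competitors.
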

\begin{proof}
For each $i,$ let $\mu_i$ be the Beltrami form of $f_i.$ Let $h'=(h_1\circ f_1^{-1}, \dots, h_n\circ f_n^{-1})$. Since every $f_i$ agrees on $\partial{\mathbb{D}}$, $h|_{\partial\mathbb{D}}$ and $h'|_{\partial\mathbb{D}}$ differ by a reparametrization of $\partial\mathbb{D}$. The hypothesis that $h$ solves the Plateau problem thus yields that $$\mathcal{E}(\mathbb{D},h) \leq \sum_{i=1}^n \mathcal{E}(\mathbb{D},h_i\circ f_i^{-1}).$$ Splitting $\mathcal{E}(\mathbb{D},h)$ into a sum of energies,  
\begin{equation}\label{something}
    \sum_{i=1}^n \mathcal{E}(\mathbb{D},h_i) -\sum_{i=1}^n \mathcal{E}(\mathbb{D},h_i\circ f_i^{-1})\leq 0.
\end{equation}
For each $i$, we apply the Reich-Strebel formula to (\ref{something}) and insert (\ref{enho}) for $e(h_i).$ Dividing by $4$ returns $$\sum_{i=1}^n\textrm{Re} \int_S \phi_i\cdot \frac{ \mu_i}{1-|\mu_i|^2}\leq \sum_{i=1}^n\int_S |\phi_i|\cdot \frac{|\mu_i|^2}{1-|\mu_i|^2}.$$ 
\end{proof}
\begin{remark}
    A version of Proposition \ref{newmaincrit} holds for maps between closed surfaces. In that context, we proved the converse statement \cite[Theorem A]{MS}.
\end{remark}
Define the Korevaar-Schoen area as $$A(h) = \int_{\mathbb{D}}\sqrt{\det g(h)}.$$ When $(X,d)$ can be isometrically embedded in a manifold, $h$ is injective, and $g(h)$ is smooth and non-degenerate on a residual set, then $A(h)$ is the area of the image of $h$. Note that, unlike the energy, $A(h)$ is invariant under precomposition of $h$ by diffeomorphisms. By an application of Cauchy-Schwarz,
$$A(h)\leq \mathcal{E}(\mathbb{D},h),$$
with equality if and only if $\phi(h)=0$. 
\begin{remark}\label{areminimal}
    Plateau problem solutions are minimal maps and minimize $A(\cdot)$ (and interestingly, the analogous property is not true for more general proper metric spaces \cite{LWenger}). Indeed, by local isometric factoring, the Korevaar-Schoen metric of a harmonic map to $(X,d)$ is smooth off of a discrete set. From that observation, the proof for maps to $\R^n$ goes through (see \cite[Chapter 4.1]{CM}).    
\end{remark}

\begin{remark}\label{filling}
   Minimal maps to $(X,d)$ with fixed boundary curve do not always exist (take $n$ copies of the same tree). In view of Theorem \ref{KSthm}, one could hope to construct a minimal map by adapting the Douglas-Rado strategy for the classical Plateau problem. We expect this method to have a better chance of working when the trees are dual to foliations that have a ``filling" property (see \cite{We}).
\end{remark}

\subsection{The Plateau problem from a polynomial}
 We now set $n=2$. In view of Proposition \ref{densityarg}, we concern ourselves only with the Plateau problem for products of simplicial trees. Let $\phi$ be a polynomial on $\mathbb{C}$ with simplicial trees $(T_1,2d_1)$ and $(T_2,2d_2)$ arising from the vertical and horizontal foliations, and let $$\pi=(\pi_1,\pi_2):\mathbb{C}\to (X,d)=(T_1,2d_1)\times (T_2,2d_2)$$ be the product of the leaf-space projections. We will show that $\pi|_{\partial\mathbb{D}}$ is an embedded circle, so that $\pi|_{\overline{\mathbb{D}}}$ defines a Plateau problem, and that $\pi|_{\overline{\mathbb{D}}}$ solves that problem.
 \begin{lem}
     $\pi$ defines a homeomorphism from $\mathbb{C}$ onto its image.
 \end{lem}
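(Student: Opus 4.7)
The plan is to verify continuity, injectivity, and properness of $\pi$, and then conclude by the standard fact that a continuous, proper injection into a Hausdorff space is a topological embedding. Continuity is immediate: each $\pi_i$ is continuous by construction, hence so is $\pi=(\pi_1,\pi_2)$.

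For injectivity, I would equip $\mathbb{C}$ with the singular flat metric $|\phi|^{1/2}|dz|$. Because $\phi$ is a non-constant polynomial, this metric is complete, and its only singularities are cone points at the zeros of $\phi$, with cone angle $(m+2)\pi\geq 3\pi$ at a zero of order $m$. These angles exceed $2\pi$, so the flat metric is locally $\mathrm{CAT}(0)$, and since $\mathbb{C}$ is simply connected it is globally $\mathrm{CAT}(0)$. In a local natural coordinate $w$ satisfying $dw^2=\phi$, vertical and horizontal leaves are straight lines; at a cone point of order $m$, the $(m+2)$ vertical and $(m+2)$ horizontal prongs alternate with flat angular spacing $\pi/2$ between consecutive prongs, so any two prongs of the same type subtend flat angle $\geq \pi$. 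Consequently, the arc joining two points within a common vertical (or horizontal) leaf is a flat geodesic, even if it crosses cone points. If $\pi(z_1)=\pi(z_2)$ with $z_1\neq z_2$, then the arc inside their common vertical leaf and the arc inside their common horizontal leaf give two distinct flat geodesics from $z_1$ to $z_2$, contradicting $\mathrm{CAT}(0)$ uniqueness.

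For properness, I would use that the flat metric on $\mathbb{C}$ is itself proper, since $|\phi|^{1/2}$ grows polynomially at infinity, so flat-bounded sets are Euclidean-bounded. Each $\pi_i\colon (\mathbb{C},|\phi|^{1/2}|dz|)\to (T_i,d_i)$ is $1$-Lipschitz, because the transverse measures $|\mathrm{Re}\sqrt{\phi}|$ and $|\mathrm{Im}\sqrt{\phi}|$ are dominated pointwise by $|\sqrt{\phi}|$. For any compact $K_i\subset T_i$, the preimage $\pi_i^{-1}(K_i)$ is therefore flat-bounded in its transverse direction, so $\pi_1^{-1}(K_1)\cap \pi_2^{-1}(K_2)$ is flat-bounded outright; being also closed, it is compact.

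I expect the crux to be the injectivity step, and specifically the justification that an arc sitting inside a single leaf remains a flat geodesic as it crosses a zero of $\phi$. This reduces to the angular computation above: adjacent prongs of the same type subtend flat angle exactly $\pi$, which is precisely what the $\mathrm{CAT}(0)$-geodesic condition at a cone point demands, so the uniqueness argument applies uniformly without any genericity assumption on $\phi$.
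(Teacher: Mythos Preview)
Your injectivity argument via the $\mathrm{CAT}(0)$ geometry of the singular flat metric is correct and rather elegant---arguably cleaner than the paper's route, which shows $\pi$ is a local homeomorphism and proper, hence a covering onto its image, and then checks degree one by noting that vertices have unique preimages. Your key observation, that the arc inside a single leaf is a flat geodesic because adjacent same-type prongs make flat angle exactly $\pi$ (so any two make angle $\geq\pi$ on both sides of the cone point), is exactly right, and it handles non-adjacent prongs and multiple zeros without any genericity hypothesis.

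The gap is in the properness step. The $1$-Lipschitz property of $\pi_i$ says $d_i(\pi_i(z),\pi_i(w))\leq d_{\mathrm{flat}}(z,w)$: it controls images of flat-bounded sets, not preimages. Your sentence ``the preimage $\pi_i^{-1}(K_i)$ is therefore flat-bounded in its transverse direction'' does not follow from $1$-Lipschitzness---the inequality points the wrong way---and even granting some precise meaning to that phrase, the inference ``bounded horizontally'' plus ``bounded vertically'' implies ``flat-bounded'' is not automatic once cone points are present, since there is no global orthogonal coordinate in which to add the two bounds. What you actually need is a statement like $d_1(\pi_1(z),\pi_1(p))+d_2(\pi_2(z),\pi_2(p))\to\infty$ as $|z|\to\infty$, and that is essentially properness itself. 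The paper obtains properness differently: it extends $\phi$ to $\mathbb{CP}^1$ with a pole of order $n+4$ at $\infty$ and reads off from the standard trajectory structure near a higher-order pole (Strebel) that $\pi$ is injective on a neighbourhood of $\infty$ and sends it to infinity in $X$. Either invoke that structure, or supply a genuine lower bound relating the tree distances to $d_{\mathrm{flat}}$; the Lipschitz upper bound alone will not do it.
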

 \begin{proof}
Note that for points $x,y\in \mathbb{C}$, the condition that $\pi(x)=\pi(y)$ is equivalent to the statement that $x$ and $y$ lie on the same leaf of both the vertical folation and the horizontal foliation.
 
Observe that $\pi$ is a local homeomorphism onto its image: at any point $p\in \mathbb{C},$ we can choose a local coordinate $w$ with $w(p)=0$ and in which $\phi$ is given by $w^n dw^2$. In such a coordinate, for $n>0$ both foliations have critical trajectories that are $(n+2)$-pronged stars, and for $n=0$ we have ordinary foliation charts, and from these local pictures it is easy to verify that $\pi$ is an embedding in a neighbourhood of $p.$ As well, $\pi$ is a proper map. Indeed, note that  $\phi$ extends to a meromorphic quadratic differential on $\mathbb{CP}^1$ with a pole of order $n+4$ at $\infty$. The structure of the foliations at poles are understood (see $\S 6$ and $\S 7$, as well as figures 11, 12, and 13 in \cite{St}), and one can see from these structures that $\pi$ is injective in a neighbourhood of $\infty$. It follows that $\pi$ is proper. Putting the pieces together, we have that $\pi$ is a covering map onto its image. Since every vertex has exactly one preimage, $\pi$ must be a homeomorphism. By properness, $\pi(\mathbb{C})$ is closed.
 \end{proof}
 Thus, $\pi(\partial\mathbb{D})$ is an embedded circle. Our goal now for this subsection is to prove the result below.
 \begin{prop}\label{polymin}
    $\pi|_{\overline{\mathbb{D}}}$ solves the Plateau problem for $\pi(\partial \mathbb{D}).$
\end{prop}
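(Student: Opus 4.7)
The plan is to establish the Plateau inequality $\mathcal{E}(g)\geq \mathcal{E}(\pi|_{\overline{\mathbb{D}}})$ by the chain
$$\mathcal{E}(g)\geq A(g)\geq A(\pi|_{\overline{\mathbb{D}}})=\mathcal{E}(\pi|_{\overline{\mathbb{D}}}),$$
where the first inequality is the Cauchy-Schwarz bound $A\leq\mathcal{E}$ recalled in Section 3.2, and the equality holds because $\pi$ is minimal: the Hopf differentials $\phi/4$ of $\pi_1$ and $-\phi/4$ of $\pi_2$ sum to zero. The content is the middle area inequality, which I would prove by reducing to the classical planar Plateau inequality via a retraction of $X$ onto $Y:=\pi(\mathbb{C})$.

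First I would observe that $Y$ is a $2$-dimensional subcomplex of the $2$-complex $X=T_1\times T_2$: for any open $2$-cell $(e_1\times e_2)^\circ$, the intersection $Y\cap(e_1\times e_2)^\circ$ is open by the local isometric factoring of $\pi$ and closed by the properness of $\pi$, so it is either the whole interior or empty. I would then build a continuous retraction $r:X\to Y$ by folding each missing open $2$-cell onto the edges of its closure that lie in $Y$. On a prototypical missing infinite quadrant $[0,\infty)^2$ with corner vertex $(v_1,v_2)$, the fold $(x,y)\mapsto(x-y,0)$ for $x\geq y$ and $(0,y-x)$ for $y\geq x$ is continuous, restricts to the identity on the two bounding edges, and has rank-one differential off the diagonal. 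Because each fold is the identity on the edges of $X$ it touches, these local definitions glue continuously across the $1$-skeleton.

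The rank-one property of $dr$ on the missing cells ensures that the pointwise Jacobian $J(r\circ g)$ vanishes almost everywhere on $g^{-1}(X\setminus Y)$, so
$$A(r\circ g)=\int_{g^{-1}(Y)}J(g)\leq A(g).$$
Using the homeomorphism $\pi:\mathbb{C}\to Y$ from the preceding lemma, I would set $G:=\pi^{-1}\circ r\circ g:\overline{\mathbb{D}}\to\mathbb{C}$, where $\mathbb{C}$ carries the singular flat $|\phi|$-metric; since $\pi$ is a local isometry off the discrete zero set of $\phi$, we have $A(G)=A(r\circ g)$. Because $g(\partial\mathbb{D})\subset Y$ and $r$ is the identity on $Y$, the restriction $G|_{\partial\mathbb{D}}=\psi|_{\partial\mathbb{D}}$ is a self-homeomorphism of $\partial\mathbb{D}$ of degree $\pm 1$, where $\psi$ denotes the given reparametrization. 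The classical planar area-Plateau inequality then applies: by the area formula combined with the degree condition, $|G^{-1}(p)|\geq 1$ for almost every $p\in\mathbb{D}$, so $A(G)\geq \int_{\mathbb{D}}|\phi|=A(\pi|_{\overline{\mathbb{D}}})$.

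The main obstacle is the explicit construction of $r$ in full generality: when the trees have several vertices and finite edges, missing cells may be finite rectangles with more than two neighbors in $Y$, so one must identify precisely which of their boundary edges lie in $Y$ and adapt the fold accordingly, checking compatibility on the shared $1$-skeleton. Once the retraction is in hand, the remainder is a direct adaptation of the planar area-Plateau estimate, with the only technical subtlety being conical singularities of the $|\phi|$-metric, which have Hausdorff $2$-measure zero and so do not affect the degree-based area bound.
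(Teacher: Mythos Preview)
Your high-level strategy is the paper's: establish the chain $\mathcal{E}(g)\geq A(g)\geq A(\pi|_{\overline{\mathbb{D}}})=\mathcal{E}(\pi|_{\overline{\mathbb{D}}})$ by producing a retraction $r:X\to\pi(\mathbb{C})$ and then arguing that any competitor must cover $\pi(\mathbb{D})$. The two proofs diverge in how each of these steps is executed.

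For the retraction, the paper does not attempt a cell-by-cell fold. Instead it invokes Guirardel's machinery: one first checks that $\pi(\mathbb{C})$ has convex fibers over each factor (Lemma~\ref{convexfibers}), so by \cite[Lemma~5.4]{G} its complement is a union of quadrants and $\pi(\mathbb{C})$ is a \emph{core}; then \cite[Lemma~4.18]{G} supplies a uniformly Lipschitz semi-flow into the core, whose limit is the desired retraction. This sidesteps precisely the obstacle you flag: when the quadrants in $X\setminus\pi(\mathbb{C})$ contain several cells of the product cell structure, a direct fold must be organized consistently across interior edges, and your description does not yet do this. The Guirardel approach handles all such cases uniformly, at the cost of importing an external result.

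For the area step, the paper uses the retraction only topologically: if $g(\overline{\mathbb{D}})$ missed a point of $\pi(\mathbb{D})$, composing $r\circ g$ with $\pi^{-1}$ would yield a retraction $\overline{\mathbb{D}}\to\partial\mathbb{D}$, a contradiction; from $g(\overline{\mathbb{D}})\supset\pi(\mathbb{D})$ the inequality $A(g)\geq A(\pi|_{\overline{\mathbb{D}}})$ is immediate. Your route---compose with $r$, pull back via $\pi^{-1}$ to $(\mathbb{C},|\phi|)$, and invoke the degree/area formula---is equivalent in spirit and also correct, though it requires you to track Lipschitz regularity through the composition and to justify the area formula for maps into the $2$-complex $X$. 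The paper's no-retraction argument is slightly cleaner here because it uses $r$ only as a continuous map.

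In short: your plan is sound and matches the paper's architecture, but the paper replaces your unfinished hand-built retraction with Guirardel's core construction, which is the substantive input (Lemma~\ref{loclip}).
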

The key step in the proof is to establish the following very useful fact.
 \begin{lem}\label{loclip}
     There exists a Lipschitz retraction $r:X\to \pi(\mathbb{C}).$
 \end{lem}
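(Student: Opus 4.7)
The plan is to view $X = T_1 \times T_2$ as a $2$-dimensional cubical complex and to realize $\pi(\mathbb{C})$ as a subcomplex, then build the retraction cell-by-cell. By Proposition \ref{auwan}, each $T_i$ is a finite simplicial $\R$-tree with finitely many infinite edges, so $X$ naturally decomposes into $0$-cells (pairs of tree vertices), $1$-cells (a vertex of one tree times an edge of the other), and (possibly unbounded) rectangular $2$-cells $e_1 \times e_2$. The metric on $X$ is the product metric, and on each $2$-cell it agrees with the Euclidean metric on the corresponding rectangle.

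The first step is to show that $\pi(\mathbb{C})$ is itself a subcomplex of $X$ consisting of a union of closed $2$-cells. In the previous lemma, $\pi$ was shown to be a local homeomorphism at regular points of $\phi$, which are dense. This gives that $\pi(\mathbb{C}) \cap \mathrm{int}(C)$ is open in the connected set $\mathrm{int}(C)$ for every $2$-cell $C$. Since $\pi(\mathbb{C})$ is also closed in $X$ (again by the previous lemma), this intersection is either empty or equals $\mathrm{int}(C)$; taking closures gives the claim.

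Using the planar embeddings of $T_1, T_2$ from Proposition \ref{auwan} together with the planarity of $\mathbb{C}$, I would next establish the key combinatorial fact: at every vertex $\pi(z_0) \in \pi(\mathbb{C})$ coming from a zero $z_0$ of $\phi$ of order $n$, the $n+2$ vertical and $n+2$ horizontal prongs at $z_0$ alternate cyclically around $z_0 \in \mathbb{C}$, so the $2(n+2)$ sectors at $z_0$ map homeomorphically to a cyclically consecutive family of $2(n+2)$ cells among the $(n+2)^2$ cells of $X$ meeting $\pi(z_0)$. The remaining missing $2$-cells at $\pi(z_0)$ then form wedges whose boundaries in $\pi(\mathbb{C})$ are L-shapes — pairs of edges of $\pi(\mathbb{C})$ meeting at the corner vertex. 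With this in hand, the retraction is defined to be the identity on $\pi(\mathbb{C})$, and on each missing rectangular cell to be the standard $1$-Lipschitz fold $(s,t) \mapsto (\max(s-t, 0), \max(t-s, 0))$ in coordinates centered at the corner. Consistency on shared edges follows by construction, and the global Lipschitz constant is $1$.

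The main obstacle is the combinatorial claim about the cyclic arrangement of cells in $\pi(\mathbb{C})$ around each vertex, together with its consequence that every component of $X \setminus \pi(\mathbb{C})$ is a wedge with L-shaped boundary. In the generic case (simple zeros, no saddle connections), each component is a single rectangle attached to a unique zero of $\phi$ and the claim is immediate from the planar local picture at that zero. For polynomials with higher-order zeros or saddle connections, one must also handle ``mixed'' vertices $(v^V, v^H) \in X$ coming from distinct zeros — these are not in $\pi(\mathbb{C})$, and the components of the complement adjacent to them can span several cells. The same cyclic-order argument, now applied at each of the bounding vertices of such a component, shows that the component is still bounded by L-shapes in $\pi(\mathbb{C})$, so the fold construction generalizes.
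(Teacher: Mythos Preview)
Your approach is genuinely different from the paper's. The paper does not build the retraction cell-by-cell; instead it observes that $\pi(\mathbb{C})$ has \emph{convex fibers}---for each $x \in T_i$ the set $p_i^{-1}(x)\cap\pi(\mathbb{C})$ is the $\pi$-image of a single leaf, hence connected---and then invokes two results of Guirardel \cite{G}: a closed connected subset with convex fibers is the complement of a family of quadrants (products of directions), and any such ``core'' surjecting onto both factors carries a uniformly Lipschitz semi-flow retracting $X$ onto it. This structural route bypasses all combinatorics of zeros and saddle connections and is what makes the argument uniform in $\phi$.

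Your hands-on construction is exactly what the paper calls ``easy to build by hand'' for $\phi=z^n dz^2$, and for a single zero it works as you describe (modulo the fold being $\sqrt{2}$-Lipschitz rather than $1$-Lipschitz). The gap is the multi-zero case. First, your claim that mixed vertices $(v^V,v^H)$ coming from distinct zeros are not in $\pi(\mathbb{C})$ is false: such a vertex lies in $\pi(\mathbb{C})$ whenever a vertical separatrix from one zero meets a horizontal separatrix from the other, which generically does happen. Second, once $T_1$ or $T_2$ has a finite internal edge, $X$ contains bounded rectangular $2$-cells with four sides, and a component of $X\setminus\pi(\mathbb{C})$ can be a quadrant (a product of half-trees) spanning several such cells; your L-shape fold is tailored to a single quarter-plane with one corner and does not obviously extend, nor do you verify that per-cell folds patch along internal edges shared by two missing cells. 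The cyclic-order argument is local to one vertex and does not control the global shape of a component. One could try to salvage the approach by restricting to a sufficiently generic dense class of polynomials (which is all Proposition~\ref{densityarg} requires), but the multi-zero analysis would still need to be carried out; the convex-fibers route avoids it entirely.
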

 Assuming the lemma, we prove the main proposition.
 \begin{proof}[Proof of Proposition \ref{polymin}]
Suppose that $g$ is another candidate for the Plateau problem. For topological reasons, $g(\overline{\mathbb{D}})$ contains $\pi(\mathbb{D})$. Indeed, suppose for contradiction's sake that $g(\overline{\mathbb{D}})$ misses a point in $ \pi(\mathbb{D}).$   Using the retraction map $r$, we retract $g(\overline{\mathbb{D}})$ to a subset of $\pi(\mathbb{C})$. Using that $\pi$ is a homeomorphism, we can then build a retraction from the image of $r\circ g$ to $\pi(\overline{\partial\mathbb{D}})$. Precomposing $g$ with a homeomorphism of $\overline{\mathbb{D}}$ in order to assume that $g|_{\partial\mathbb{D}}=\pi|_{\partial\mathbb{D}}$, and then post-composing $r\circ g$ by $\pi^{-1},$ we obtain a retraction from $\overline{\mathbb{D}}$ to $\partial\mathbb{D}$, which is of course impossible.

As a consequence of this inclusion, we have that $A(\pi|_{\overline{\mathbb{D}}})\leq A(g)$.
To conclude that $\pi|_{\overline{\mathbb{D}}}$ solves this Plateau problem, we simply use that energy dominates area: $$\mathcal{E}(\mathbb{D},\pi)= A(\pi|_{\overline{\mathbb{D}}})\leq A(g)\leq \mathcal{E}(\mathbb{D},g).$$
\end{proof}
 \begin{figure}[ht]
     \centering
     \includegraphics[scale=0.25]{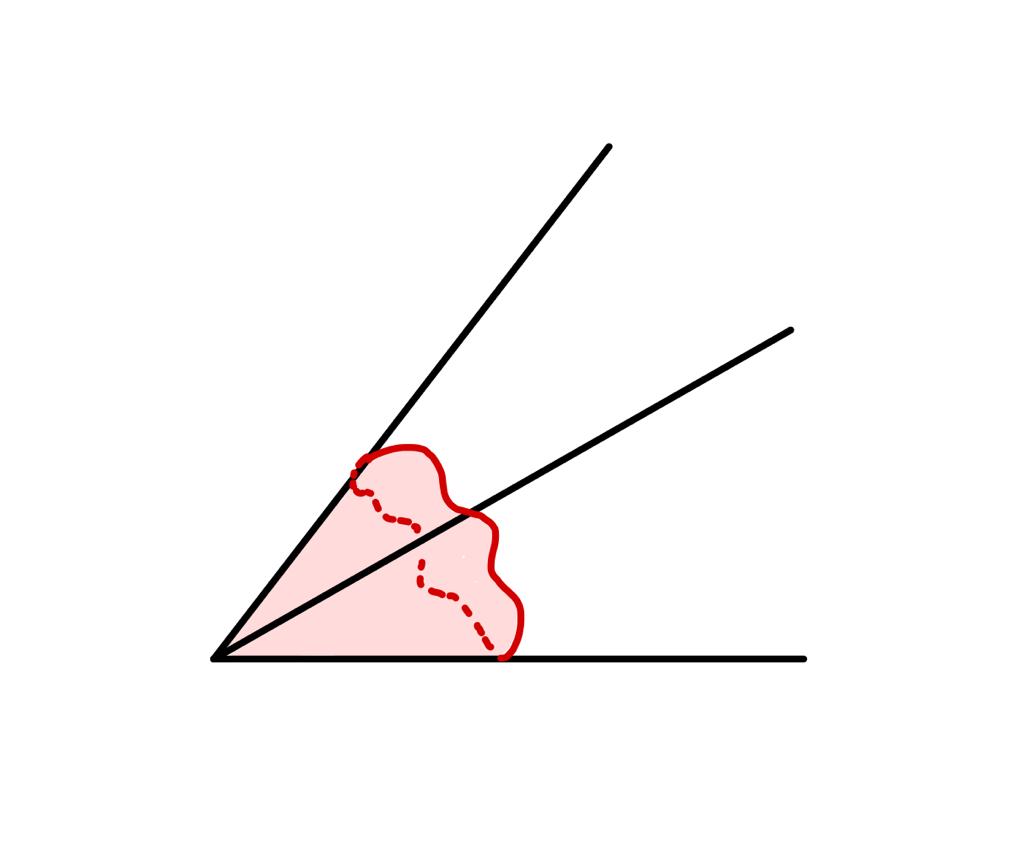}
    \caption{An embedded circle bounding a disk in a particularly simple product of simplicial complexes.}
     \label{fig:fig2}
 \end{figure} 
 
 The remainder of this subsection is devoted to justifying Lemma \ref{loclip}. The retraction $r$ is easy to build by hand in some particular cases (we invite the reader to try for $\phi=z^n dz^2)$. To give a proof that won't become too complicated in the general case, we go through a result of Guirardel \cite{G} about retractions onto subsets of $X$. First recall some terminology from \cite{G}. Given any $\R$-tree $(T,d)$, a direction based at a point $x\in T$ is a connected component of $T\backslash\{x\}$. In the product of trees $X,$ a product of directions based at points $x$ and $y$ is called a quadrant based at $(x,y)$. Given a family of quadrants $\mathcal{Q}$, the core is the complement $\mathcal{C}_{\mathcal{Q}}=X\backslash \cup_{Q\in \mathcal{Q}}Q$. Also, for $i=1,2$, let $p_i: X\to T_i$ be the projection map.
\begin{prop}[Lemma 4.18 in \cite{G}]\label{4.18}
    Let $\mathcal{Q}$ be a family of quadrants whose core $\mathcal{C}_{\mathcal{Q}}$ is connected and such that for $i=1,2,$ $p_i(\mathcal{C}_{\mathcal{Q}})=T_i.$ Then there exists a semi-flow $\varphi_t: X\to X$, $t\in [0,\infty)$, that restricts to the identity on $\mathcal{C}_{\mathcal{Q}}$, is $2(1+\sqrt{2})$-Lipschitz for all $t$, and such that for all $x\in X$ there exists a $t>0$ such that $\varphi_t(x)\in\mathcal{C}_{\mathcal{Q}}.$
 \end{prop}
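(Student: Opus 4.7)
The plan is to construct $\varphi_t$ in two stages: first build a local retraction semi-flow for each individual quadrant, and then amalgamate these into a single global semi-flow. For the first stage, write $Q = D_1 \times D_2 \in \mathcal{Q}$ based at $(x_Q, y_Q)$, and for $(a,b) \in Q$ set $\alpha = d_1(a, x_Q)$, $\beta = d_2(b, y_Q)$, $m = \min(\alpha, \beta)$. Define $\psi^Q_t(a,b) = (a_t, b_t)$, where $a_t$ is the point on the geodesic $[a, x_Q]$ at distance $\alpha - \min(t, m)$ from $x_Q$, $b_t$ is defined analogously, and extend $\psi^Q_t$ by the identity on $X \setminus Q$. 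Because the function $(a,b) \mapsto m$ is $1$-Lipschitz in the $L^1$ product metric and vanishes on $\partial Q$, the map $\psi^Q_t$ glues continuously across $\partial Q$, is uniformly Lipschitz in $t$, satisfies the semi-flow relation $\psi^Q_{t+s} = \psi^Q_t \circ \psi^Q_s$, and sends $(a,b)$ into $\partial Q \subset X \setminus Q$ at time $t = m$.

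Next I would amalgamate $\{\psi^Q\}_{Q \in \mathcal{Q}}$ into a single flow $\varphi_t$. The approach is to associate to each $p \in X \setminus \mathcal{C}_{\mathcal{Q}}$ a preferred descent vector $V(p)$ built from the infinitesimal generators of the $\psi^Q$ with $p \in Q$, and to take $\varphi_t$ as the flow of $V$. The hypothesis that $\mathcal{C}_{\mathcal{Q}}$ is connected and projects onto each $T_i$ is essential here: it forces the set of quadrants containing a given $p$ to have a controlled combinatorial structure (at most a bounded number of mutually competing quadrants in each factor), so that the definition of $V$ localizes and is uniformly Lipschitz, with Lipschitz constant of $V$ degenerating to zero as $p \to \mathcal{C}_{\mathcal{Q}}$, letting $V$ extend by zero across the boundary of the core. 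Since the $L^1$ distance from $\mathcal{C}_{\mathcal{Q}}$ strictly decreases at unit rate along flow lines away from the core, each orbit reaches $\mathcal{C}_{\mathcal{Q}}$ in finite time.

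The main obstacle is the amalgamation step: ensuring $V$ is globally Lipschitz in $p$ uniformly in $t$, and that its flow is defined for all $t \geq 0$. The subtlety is that two nearby points in $X$ can lie in very different collections of quadrants, so that the per-quadrant generators they experience a priori differ drastically; the connectedness of $\mathcal{C}_{\mathcal{Q}}$ is precisely what rules this out globally, but translating it into a local Lipschitz estimate requires a careful analysis of how quadrants are arranged in a neighborhood of $p$. I expect the explicit constant $2(1+\sqrt{2})$ to come out of the extremal case where two quadrants meet at a corner in a product of two lines and velocities are measured in the $L^2$ product metric; the $\sqrt{2}$ reflects the diagonal geometry of that corner, while the factor of $2$ accounts for the two tree factors contributing independently.
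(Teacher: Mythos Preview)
The paper does not prove this proposition at all: it is quoted verbatim as Lemma~4.18 from Guirardel's paper~\cite{G}, and the construction of the semi-flow is entirely deferred to that reference. The only original content the paper adds is a short paragraph extracting the explicit Lipschitz constant $2(1+\sqrt{2})$, since Guirardel does not state it. That extraction is not a reconstruction of the flow; rather, the author reports that Guirardel's proof yields a fibrewise estimate---for fixed $x_2$, the map $x_1\mapsto\varphi_t(x_1,x_2)$ is $(1+\sqrt{2})$-Lipschitz from $(T_1,2d_1)$ to $(X,d_\infty)$, and symmetrically in the other factor---and then combines the two fibrewise bounds via the triangle inequality and the comparison $d\leq\sqrt{2}\,d_\infty$ between the $L^2$ and $L^\infty$ product metrics to get the stated constant.

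Your proposal, by contrast, attempts to rebuild Guirardel's semi-flow from scratch. The per-quadrant diagonal retraction you write down is reasonable and is indeed the basic local move. However, the heart of the matter is exactly the amalgamation step you flag as the ``main obstacle,'' and your treatment of it is only a heuristic: you speak of an infinitesimal generator $V$ and its flow, but $X$ is a product of $\R$-trees with no differentiable structure, so one must instead work with piecewise-geodesic trajectories and check compatibility across quadrant boundaries directly. Your guess for the origin of the constant (a single corner in $\R^2$) also does not match the paper's derivation, which comes from the fibrewise $(1+\sqrt{2})$ bound together with two factors of $\sqrt{2}$ from the $L^1$/$L^2$/$L^\infty$ comparisons, not from a single extremal configuration. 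None of this is wrong as an outline, but it is not a proof, and in any case it is not what the paper does: the paper simply cites the result.
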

\begin{remark}
     In Guirardel's setting, there is a group $G$ acting on each of the trees, and the semi-flow is $G$-equivariant. However, there is no assumption on $G$ nor on the action, and so we can take $G$ to be trivial. 
\end{remark}
The Lipschitz constant is not explicitly stated in \cite[Lemma 4.18]{G}, so we will explain here. Going through the proof, one finds that at each fixed $t$ and $x_2\in T_2$, $\varphi_t$ satisfies, $d_\infty(\varphi_t(x_1,x_2),\varphi_t(x_1',x_2))\leq (1+\sqrt{2})(2d_1)(x_1,x_2),$ where $d_{\infty}=\max_{i=1,2}2d_i$, and likewise for fixed $t$ and $x_1\in T_1.$ From the triangle inequality, we have that for points $(x_1,x_2),(y_1,y_2)\in X$, $$d_\infty(\varphi_t(x_1,x_2),\varphi_t(y_1,y_2)) \leq (1+\sqrt{2})(2d_1(x_1,y_1)+2d_2(x_2,y_2))\leq \sqrt{2}(1+\sqrt{2})(2d_1\times 2d_2)((x_1,x_2),(y_1,y_2)).$$ From the fact that $(2d_1\times 2d_2)\leq \sqrt{2}d_\infty,$ we conclude that each $\varphi_t$ is $2(1+\sqrt{2})$-Lipschitz.

\begin{remark}
    We'd like to address a possible ambiguity in the proof of \cite[Lemma 4.18]{G}. Guirardel uses his Proposition 4.6, which applies to families of quadrants in which no two quadrants face each other (see \cite[Definition 4.1]{G}). But it is easily verified that if $p_i(\mathcal{C}_{\mathcal{Q}})=T_i$ for $i=1,2,$ then no two quadrants face each other (if $x\in X$ is contained in the intersection of the facing quadrants, then $p_i(x)\not \in p_i(\mathcal{C}_{\mathcal{Q}})).$
\end{remark}

\begin{prop}\label{constructing r}
    Let $\mathcal{Q}$ be a family of quadrants whose core $\mathcal{C}_{\mathcal{Q}}$ is connected and such that for $i=1,2,$ $p_i(\mathcal{C}_{\mathcal{Q}})=T_i.$ Then there exists a Lipschitz retraction $r:X\to \mathcal{C}_{\mathcal{Q}}.$
\end{prop}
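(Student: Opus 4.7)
The plan is to build $r$ directly from the semi-flow $\varphi_t$ provided by Proposition \ref{4.18}. The key observation is that the semi-flow not only eventually sends every point into $\mathcal{C}_{\mathcal{Q}}$, but once a point lands there it must stay put. Indeed, if $\varphi_s(x)\in \mathcal{C}_{\mathcal{Q}}$, then the semi-flow identity $\varphi_{t+s}=\varphi_t\circ \varphi_s$ combined with the fact that $\varphi_t$ restricts to the identity on $\mathcal{C}_{\mathcal{Q}}$ gives $\varphi_{t+s}(x)=\varphi_s(x)$ for every $t\geq 0$. So for each $x\in X$, the orbit $t\mapsto \varphi_t(x)$ is eventually constant.

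Given this, I would simply define
\[
r(x)=\lim_{t\to\infty}\varphi_t(x),
\]
which is well-defined since the limit is in fact attained. By construction $r(x)\in \mathcal{C}_{\mathcal{Q}}$, and if $x\in \mathcal{C}_{\mathcal{Q}}$ already, then $\varphi_t(x)=x$ for all $t$, so $r(x)=x$. Hence $r$ is a retraction onto $\mathcal{C}_{\mathcal{Q}}$.

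For the Lipschitz bound, Proposition \ref{4.18} guarantees that each $\varphi_t$ is $2(1+\sqrt{2})$-Lipschitz with respect to the product metric on $X$, with a constant independent of $t$. Passing to the pointwise limit preserves the Lipschitz constant: for any $x,y\in X$,
\[
d(r(x),r(y))=\lim_{t\to\infty} d(\varphi_t(x),\varphi_t(y))\leq 2(1+\sqrt{2})\,d(x,y).
\]
So $r$ is Lipschitz with the same constant.

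I do not anticipate a genuine obstacle here: all the analytic content has been packaged into Proposition \ref{4.18}, and the only thing to check beyond that is the stability statement that orbits do not leave $\mathcal{C}_{\mathcal{Q}}$ after entering, which is immediate from the semi-flow axioms together with $\varphi_t|_{\mathcal{C}_{\mathcal{Q}}}=\id$. The one mild subtlety is noting that the orbit is eventually constant (not merely eventually in $\mathcal{C}_{\mathcal{Q}}$), so that one does not have to worry about convergence of the flow in any nontrivial sense.
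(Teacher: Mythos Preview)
Your proof is correct and follows essentially the same approach as the paper: define $r$ as the limit of the semi-flow $\varphi_t$ from Proposition~\ref{4.18} and inherit the Lipschitz constant. If anything, your argument is slightly cleaner, since you make explicit from the start why each orbit is eventually constant (via the semi-flow identity and $\varphi_t|_{\mathcal{C}_{\mathcal{Q}}}=\id$), whereas the paper first appeals to convergence on compacta and only afterward remarks that eventual constancy upgrades sub-convergence to genuine convergence.
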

\begin{proof}
    Let $\varphi_t$ be the semi-flow of Proposition \ref{4.18}. Since $\varphi_t$ is uniformly Lipschitz for all $t$, as $t\to\infty$ it converges on compacta to a Lipschitz retraction $r:X\to \mathcal{C}_{\mathcal{Q}}$. Although it is not needed for the proof, we have convergence rather than sub-convergence because for every $x,$ there exists $T>0$ such that $\varphi_t(x)=\varphi_T(x)$ for all $t\geq T.$
\end{proof}

To prove Lemma \ref{loclip} we just need to verify that $\pi(\mathbb{C})$ satisfies the hypothesis of Proposition \ref{constructing r}. To carry out this verification, we use another result from \cite{G}. We say that a subset $E\subset X$ has convex fibers (or connected fibers) if for all $x\in T_i,$ $i=1,2$, the set $p_i^{-1}(x)\cap E$ is convex (this allows for empty fibers). We point out that in a tree, a connected set is convex.

\begin{lem}[Lemma 5.4 in \cite{G}]\label{Glem}
    Let $T_1',T_2'$ be two $\R$-trees and $F$ a non-empty connected subset of $T_1'\times T_2'$ with convex fibers. Then the complement of $\overline{F}$ is a family of quadrants.
\end{lem}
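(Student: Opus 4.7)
The plan is to show that every point $(a,b) \in X \setminus \overline{F}$, where $X = T_1' \times T_2'$, lies in some quadrant disjoint from $\overline{F}$. As a preliminary, each projection $\mathrm{pr}_i(F) \subseteq T_i'$ is connected (image of the connected set $F$ under a continuous map), and a connected subset of an $\R$-tree is automatically convex; the closures $\overline{\mathrm{pr}_i(F)}$ are therefore closed convex subtrees. A sequential argument also shows that the fibers of $\overline{F}$ themselves remain convex in each tree.

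\textbf{Easy case.} First, I will dispose of the case where one of $a, b$ fails to lie in the relevant projection closure. Say $a \notin \overline{\mathrm{pr}_1(F)}$. The nearest-point projection of $a$ onto the closed convex subtree $\overline{\mathrm{pr}_1(F)}$ gives a point $x_0$, and the direction $D_1$ at $x_0$ containing $a$ is disjoint from $\overline{\mathrm{pr}_1(F)}$. Picking any $y_0 \neq b$ and letting $D_2$ be the direction at $y_0$ containing $b$, the quadrant $D_1 \times D_2$ contains $(a,b)$ and misses $\overline{F}$, since every point of $\overline{F}$ has first coordinate in $\overline{\mathrm{pr}_1(F)}$. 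The symmetric case is identical.

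\textbf{Hard case.} When $a \in \overline{\mathrm{pr}_1(F)}$ and $b \in \overline{\mathrm{pr}_2(F)}$, I will use the observation that the quadrant $D_1 \times D_2$ based at a candidate $(x_0, y_0)$ contains $(a,b)$ and avoids $\overline{F}$ precisely when $x_0 \neq a$, $y_0 \neq b$, and for every $(x, y) \in \overline{F}$ either $x_0 \in [a, x]$ in $T_1'$ or $y_0 \in [b, y]$ in $T_2'$. My plan is to construct $(x_0, y_0)$ from a minimizer $(x^*, y^*) \in \overline{F}$ of a product distance to $(a,b)$. The sup metric $\max(d_1', d_2')$ is a natural candidate, as simple planar test cases (the diagonal segment, a filled triangle, the L-shape) show that the tree directions at such a sup-minimizer toward $a$ and $b$ frequently yield a valid quadrant. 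The convex-fibers and connectedness hypotheses are then what should force the required dichotomy: a hypothetical $(x, y) \in \overline{F}$ violating both conditions would, through the tree branching at $x^*$ and $y^*$ together with a cut-and-paste along a fiber of $\overline{F}$, produce a point of $\overline{F}$ strictly closer to $(a,b)$ than $(x^*, y^*)$, a contradiction.

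\textbf{Main obstacle.} I expect the main technical difficulty to lie in the hard case, in two spots. First, sup-metric minimizers of $\overline{F}$ need not be unique, and not every minimizer yields a working quadrant: in the L-shape $\{(x,0) : 0 \leq x \leq 1\} \cup \{(0,y) : 0 \leq y \leq 1\}$ with $(a,b) = (1/2, 1/2)$, among the many equidistant candidates only the corner $(0,0)$ gives a quadrant disjoint from $\overline{F}$, not the endpoints $(1,0)$ or $(0,1)$. So the construction must select a "central" minimizer, which presumably uses the full fiber structure of $\overline{F}$. Second, existence of a sup-metric minimizer in a general $\R$-tree needs care (properness or completeness), though this can be finessed by working inside an $\varepsilon$-ball around $(a,b)$ disjoint from $\overline{F}$. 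I expect Guirardel's proof in \cite{G} to handle both points via a detailed case split on the local branching structure of the two trees near the candidate base point.
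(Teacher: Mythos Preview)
The paper does not prove this lemma at all: it is quoted verbatim as Lemma 5.4 of Guirardel \cite{G} and used as a black box in the proof of Lemma \ref{loclip}. There is therefore no proof in the paper to compare your proposal against.

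For what it is worth, your outline is a reasonable sketch of how one might approach the statement, and the easy case is fine. But your ``hard case'' is genuinely only a plan, not a proof: you correctly identify that not every sup-metric minimizer works (your L-shape example), and you have not actually specified a selection rule for the ``central'' minimizer nor carried out the cut-and-paste contradiction. You yourself defer the resolution to Guirardel's argument in \cite{G}. Since the present paper does the same, there is nothing further to compare.
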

To apply Lemma \ref{Glem} to $\pi(\mathbb{C})$, we need to observe the following.
\begin{lem}\label{convexfibers}
    $\pi(\mathbb{C})\subset X$ has convex fibers.
\end{lem}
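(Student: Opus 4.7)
The plan is to reduce the claim to connectedness of individual leaves in $\mathbb{C}$ and then to appeal to continuity of $\pi$. Fix $i\in\{1,2\}$ and let $j$ denote the complementary index. Unwinding the definitions, the fiber over $x\in T_i$ is
$$p_i^{-1}(x)\cap \pi(\mathbb{C})=\{x\}\times \pi_j(\pi_i^{-1}(x)),$$
so it suffices to show that $\pi_j(\pi_i^{-1}(x))\subset T_j$ is convex for every $x\in T_i$.

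By construction of the leaf-space quotient, $\pi_i^{-1}(x)\subset \mathbb{C}$ is a single equivalence class, i.e., a leaf of the relevant foliation of $\phi$. If $x$ is not a vertex of $T_i$, this leaf is regular and is an integral curve of the associated line field on $\mathbb{C}\setminus \phi^{-1}(0)$, so homeomorphic to $\mathbb{R}$. If $x$ is a vertex, the leaf is singular: it should be read as the full connected component of the critical graph of the foliation containing the associated zero(s) of $\phi$, namely the union of those zeros with all separatrices emanating from them. In either case $\pi_i^{-1}(x)$ is connected in $\mathbb{C}$.

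The projection $\pi_j:\mathbb{C}\to T_j$ is continuous (in fact locally Lipschitz, since $e(\pi)=|\phi|/2$ and $\pi$ factors locally through a segment of $\mathbb{R}$ away from zeros of $\phi$). Therefore $\pi_j(\pi_i^{-1}(x))$ is a connected subset of the $\R$-tree $T_j$. As noted immediately before the statement of the lemma, any connected subset of an $\R$-tree is convex: given $p,q$ in a connected set $A\subset T_j$, every interior point of the geodesic $[p,q]$ is a cut point of $T_j$, so must lie in $A$, forcing $[p,q]\subset A$. This yields the desired convexity.

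The only step requiring even modest care is the description of the singular fibers $\pi_i^{-1}(x)$ over vertices of $T_i$---in particular, handling the possibility of saddle connections that glue several zeros into a single equivalence class. Once one grants that such a fiber is the connected critical subgraph containing the vertex, the rest of the argument is purely formal: continuous image of connected is connected, and connected in a tree is convex.
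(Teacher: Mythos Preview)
Your argument is correct and is essentially the same as the paper's: both identify the fiber over $x\in T_i$ with $\{x\}\times\pi_j(\ell)$ for the leaf $\ell=\pi_i^{-1}(x)$, use that $\ell$ is connected, and conclude via continuity of $\pi_j$ and the fact that connected subsets of trees are convex. You simply supply more detail---the regular/singular case distinction for $\ell$, the saddle-connection caveat, and the cut-point justification for ``connected $\Rightarrow$ convex''---where the paper's proof leaves these implicit.
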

\begin{proof}
   We assume without loss of generality that $i=1$. Let $x\in T_1$ and let $\ell$ be the leaf in $\mathbb{C}$ projecting to the point $x$ under $\pi_1$. Then $p_1^{-1}(x)\cap E=\pi(\pi_1^{-1}(x))=\pi(\ell)=\{x\}\times \pi_2(\ell).$ Since $\ell$ is connected, $\{x\}\times \pi_2(\ell)$ is connected and moreover convex.
   \end{proof}

   \begin{proof}[Proof of Lemma \ref{loclip}]
       By Lemma \ref{convexfibers}, and since $\pi(\mathbb{C})$ is connected and closed, we see that there is a collection of quadrants $\mathcal{Q}$ such that $\pi(\mathbb{C})=\mathcal{C}_{\mathcal{Q}}$. As both projections $p_i$ satisfy $p_i(\pi(\mathbb{C}))=T_i$, we can apply Proposition \ref{constructing r} to obtain the retraction $r$.
   \end{proof}

\subsection{Proof of the new main inequality}
Combining Proposition \ref{newmaincrit} and Proposition \ref{polymin}, we obtain
\begin{prop}\label{newmainpoly}
    Let $\phi$ be a polynomial quadratic differential on $\mathbb{C}$. Then, for $\phi,-\phi$ and any choice of quasiconformal maps from $\mathbb{D}\to\mathbb{D}$ that agree on $\partial\mathbb{D},$ (\ref{newmain1}) holds.
\end{prop}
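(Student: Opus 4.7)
The plan is to obtain Proposition \ref{newmainpoly} by directly combining the two results just proved in this section: Proposition \ref{polymin} and Proposition \ref{newmaincrit}. All of the substantive geometric content (the construction of the Lipschitz retraction via Guirardel's semi-flow, together with the Cauchy--Schwarz argument that energy dominates area) already lives in those statements, so the final step is essentially bookkeeping.

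Concretely, I would proceed as follows. Let $(T_1, 2d_1)$ and $(T_2, 2d_2)$ be the $\R$-trees dual to the vertical and horizontal foliations of $\phi$, with leaf-space projections $\pi_1, \pi_2 : \mathbb{C} \to T_i$, and set $\pi = (\pi_1, \pi_2)$ as in Section \ref{plateausection}. By formula (\ref{enho}), applied to $\phi$ and to $-\phi$, the Hopf differentials of $\pi_1$ and $\pi_2$ (as harmonic maps into the rescaled trees) equal $\phi/4$ and $-\phi/4$ respectively. Proposition \ref{polymin} tells us that $\pi|_{\overline{\mathbb{D}}}$ solves the Plateau problem for the embedded circle $\pi(\partial\mathbb{D})$, so the hypothesis of Proposition \ref{newmaincrit} is satisfied with $n = 2$ and $h = \pi|_{\overline{\mathbb{D}}}$. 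Applying that proposition immediately yields (\ref{newmain1}) for the pair $(\phi/4, -\phi/4)$ and for any Beltrami forms $\mu_1, \mu_2$ coming from quasiconformal maps $f_1, f_2 : \mathbb{D} \to \mathbb{D}$ that agree on $\partial\mathbb{D}$.

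Since both sides of (\ref{newmain1}) are positively homogeneous of degree one in the pair $(\phi_1, \phi_2)$, multiplying through by $4$ replaces $(\phi/4, -\phi/4)$ by $(\phi, -\phi)$, which is precisely the form of the inequality asserted by the proposition. There is no real obstacle in this argument: the proposition just packages the Plateau-theoretic work of this section into the statement that will be fed into the density argument of Proposition \ref{densityarg} to conclude the proof of Theorem B. The only thing worth double-checking is the factor of $4$ (which is why I would carry the normalization $(T_i, 2d_i)$ and formula (\ref{enho}) through the calculation explicitly), and that the two Beltrami forms $\mu_1, \mu_2$ really are allowed to be independent in the hypothesis of Proposition \ref{newmaincrit}---which is exactly what that proposition provides.
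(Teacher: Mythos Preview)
Your proposal is correct and follows exactly the paper's approach: the paper's proof is the single line ``Combining Proposition \ref{newmaincrit} and Proposition \ref{polymin}, we obtain,'' and you have spelled out precisely that combination. One minor remark: with the rescaling to $(T_i,2d_i)$ the Hopf differentials of $\pi_1,\pi_2$ are already $\phi$ and $-\phi$ (not $\phi/4,-\phi/4$), so the homogeneity step is unnecessary---though of course it does no harm.
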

To promote Proposition \ref{newmainpoly} to Theorem B we use the following basic fact.
\begin{lem}\label{density result}
    Polynomials are dense in $A^1(\mathbb{D}).$
\end{lem}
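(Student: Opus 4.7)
The plan is a two-stage approximation. Given $\phi\in A^1(\mathbb{D})$, first approximate $\phi$ in $L^1$-norm by its dilations $\phi_r(z):=\phi(rz)$, $r<1$; then approximate each $\phi_r$ by a Taylor polynomial. Since $\phi_r$ extends holomorphically to $\mathbb{D}_{1/r}\supset \overline{\mathbb{D}}$, the Taylor series of $\phi_r$ centered at $0$ converges uniformly on $\overline{\mathbb{D}}$, and uniform convergence on a set of finite Lebesgue measure implies $L^1$ convergence. So the density statement reduces to the claim that $\phi_r\to\phi$ in $A^1(\mathbb{D})$ as $r\to 1^-$.

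For this convergence, I would fix $\varepsilon>0$ and split $\mathbb{D}=\mathbb{D}_\rho\cup (\mathbb{D}\setminus \mathbb{D}_\rho)$ for some $\rho<1$ to be chosen. On the compact set $\overline{\mathbb{D}_\rho}$, the function $\phi$ is (uniformly) continuous, and a standard argument shows $\phi_r\to\phi$ uniformly on $\overline{\mathbb{D}_\rho}$ as $r\to 1^-$; in particular $\int_{\mathbb{D}_\rho}|\phi_r-\phi|\,dA\to 0$. For the annular part, I would control $\int_{\mathbb{D}\setminus\mathbb{D}_\rho}|\phi|\,dA$ using absolute continuity of the integral (choose $\rho$ close to $1$ so this is $<\varepsilon$), and for the dilated piece I would apply the change of variables $w=rz$:
\[
\int_{\mathbb{D}\setminus \mathbb{D}_\rho} |\phi_r(z)|\,dA(z) = r^{-2}\int_{\mathbb{D}_r\setminus \mathbb{D}_{r\rho}}|\phi(w)|\,dA(w)\leq r^{-2}\int_{\mathbb{D}\setminus \mathbb{D}_{r\rho}}|\phi|\,dA,
\]
which tends to $\int_{\mathbb{D}\setminus \mathbb{D}_\rho}|\phi|\,dA<\varepsilon$ as $r\to 1^-$. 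Combining the two pieces yields $\|\phi_r-\phi\|_{L^1(\mathbb{D})}\to 0$.

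The main step (really, the only nontrivial one) is the $L^1$-continuity of dilations on $A^1(\mathbb{D})$: one must prevent mass from concentrating near $\partial\mathbb{D}$ as $r\to 1$. The annular change-of-variables estimate above, combined with absolute continuity of the integral of $|\phi|\in L^1$, handles this. Once that is in hand, everything else is bookkeeping: given $\phi\in A^1(\mathbb{D})$ and $\varepsilon>0$, choose $r$ close enough to $1$ so that $\|\phi_r-\phi\|_{L^1}<\varepsilon/2$, then choose a Taylor polynomial $P$ of $\phi_r$ satisfying $\sup_{\overline{\mathbb{D}}}|P-\phi_r|<\varepsilon/(2\pi)$, giving $\|P-\phi\|_{L^1(\mathbb{D})}<\varepsilon$, as required.
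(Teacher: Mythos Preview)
Your proof is correct and is essentially the natural unpacking of the paper's two-line sketch: both arguments first pass from $\phi\in A^1(\mathbb{D})$ to a function nice on $\overline{\mathbb{D}}$ (you do this explicitly via dilations $\phi_r$, which is the standard move underlying the paper's ``$\epsilon/3$-argument''), and then approximate that function uniformly by polynomials. The one substantive difference is in the second step: the paper invokes Stone--Weierstrass, which on $C(\overline{\mathbb{D}})$ yields polynomials in $z$ and $\bar z$, not holomorphic polynomials in $z$; your use of Taylor partial sums of $\phi_r$ (holomorphic on $\mathbb{D}_{1/r}\supset\overline{\mathbb{D}}$) directly produces \emph{holomorphic} polynomials, which is what is actually needed since $A^1(\mathbb{D})$ consists of holomorphic differentials. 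In that sense your argument is the more careful one, though the gap in the paper's version is easily patched (e.g.\ by the same Taylor-series observation, or by Mergelyan/Runge).
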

\begin{proof}
    By Stone-Weierstrass, polynomials are dense in the Banach space of continuous functions on $\overline{\mathbb{D}}$ with the sup norm. A simple $\frac{\epsilon}{3}$-argument then shows density in $\mathbb{A}^1(\mathbb{D}).$
\end{proof}
It's now time to prove Proposition \ref{densityarg}. Recall the statement is that if (\ref{newmain1}) holds on an arbitrary dense subset $B\subset \mathbb{A}^1(\mathbb{D})$, then (\ref{newmain1}) holds on all of $\mathbb{A}^1(\mathbb{D})$.
\begin{proof}[Proof of Proposition \ref{densityarg}]
    Let $\phi\in \mathbb{A}^1(\mathbb{D})$, and let $(p_n)_{n=1}^\infty\subset B$ be a sequence approximating $\phi$ in $\mathbb{A}^1(\mathbb{D})$. Let $\mu_1$ and $\mu_2$ be Beltrami forms of quasiconformal maps on the unit disk with the same boundary values, and let $k=\max \{||\mu_1||_{L^{\infty}(\mathbb{D})}, ||\mu_2||_{L^{\infty}(\mathbb{D})}\}<1.$ By our hypothesis, for $\mu_1,\mu_2$ and every $p_n$: $$\textrm{Re}\int_{\mathbb{D}} \Big (p_n \frac{\mu_1}{1-|\mu_1|^2}-p_n \frac{\mu_2}{1-|\mu_2|^2}\Big ) \leq \int_{\mathbb{D}} |p_n|\Big ( \frac{|\mu_1|^2}{1-|\mu_1|^2}+\frac{|\mu_2|^2}{1-|\mu_2|^2}\Big ).$$
We check that for both $i=1,2$, $$\Big |\textrm{Re}\int_{\mathbb{D}} \phi \frac{\mu_i}{1-|\mu_i|^2} - \textrm{Re}\int_{\mathbb{D}}p_n \frac{\mu_i}{1-|\mu_i|^2} \Big | \leq \frac{k}{1-k^2}\int_{\mathbb{D}} |\phi-p_n| \to 0$$ as $n\to\infty$. Consequently,
\begin{equation}\label{rhs}
    \textrm{Re}\int_{\mathbb{D}} \Big (\phi\frac{\mu_1}{1-|\mu_1|^2}-\phi \frac{\mu_2}{1-|\mu_2|^2}\Big ) \leq \liminf_{n\to\infty} \int_{\mathbb{D}} |p_n|\Big ( \frac{|\mu_1|^2}{1-|\mu_1|^2}+\frac{|\mu_2|^2}{1-|\mu_2|^2}\Big ).
\end{equation}
For the right hand side, $$\int_{\mathbb{D}} |p_n|\Big ( \frac{|\mu_1|^2}{1-|\mu_1|^2}+\frac{|\mu_2|^2}{1-|\mu_2|^2}\Big ) \leq \int_{\mathbb{D}} |\phi| \Big ( \frac{|\mu_1|^2}{1-|\mu_1|^2}+\frac{|\mu_2|^2}{1-|\mu_2|^2}\Big ) + \frac{2k^2}{1-k^2}\int_{\mathbb{D}} |\phi-p_n|.$$ Taking $n\to\infty$ in (\ref{rhs}) thus yields $$\textrm{Re}\int_{\mathbb{D}} \Big (\phi\frac{\mu_1}{1-|\mu_1|^2}-\phi \frac{\mu_2}{1-|\mu_2|^2}\Big ) \leq \int_{\mathbb{D}} |\phi| \Big ( \frac{|\mu_1|^2}{1-|\mu_1|^2}+\frac{|\mu_2|^2}{1-|\mu_2|^2}\Big ),$$ which is exactly (\ref{newmain1}) for $n=2$.
\end{proof}
Putting everything together, we obtain Theorem B.
\begin{proof}[Proof of Theorem B]
By Proposition \ref{newmainpoly} and Lemma \ref{density result}, the new main inequality holds on a dense subset of $\mathbb{A}^1(\mathbb{D})$. We then apply Proposition \ref{densityarg}.
\end{proof}

\subsection{Proof of uniqueness}
We're now prepared to prove Theorem A. We do so following the argument of Markovi{\'c} for closed surfaces in \cite{M1}. We have to deal with a minor complication, which is that our harmonic maps might have infinite total energy. Indeed this is often the case: the harmonic diffeomorphisms in the hyperbolic case always have infinite energy (from \cite[Proposition 10]{Wan1}). Accordingly, we don't want to globally integrate any term that involves an energy density. 

Before we begin, we record a consequence of the definitions (\ref{en}) and (\ref{mhopf}): for any immersion $h$ to a Riemannian manifold of dimension at least $2$,
\begin{equation}\label{eineq}
    e(h)> 2|\phi(h)|
\end{equation}
everywhere, in the sense of tensors as before.

\begin{proof}[Proof of Theorem A]
Let $(h_1,h_2)$ and $(g_1,g_2)$ be two quasiconformal minimal maps to $(\mathbb{D}^2,\sigma_1\oplus\sigma_2)$ that induce minimal diffeomorphisms with $L^1$ Hopf differentials and that extend to the boundary as the same map $h_1\circ h_2^{-1}=g_1\circ g_2^{-1}=\varphi$. 
For $i=1,2$, set $f_i = g_i^{-1}\circ h_i.$ The extensions of $f_1$ and $f_2$ to $\partial\mathbb{D}$ agree, so the new main inequality can be used on the Beltrami forms $\mu_i$ of $f_i$. Suppose for the sake of contradiction that for at least one $i$, $f_i$ is not the identity. Let $\phi=\phi_1$ be the Hopf differential of $h_1$, so that $-\phi=\phi_2$ is the Hopf differential of $h_2$. To simplify notation, for each $0<r<1,$ set $\mathbb{D}_r=\{z\in\mathbb{C}: |z|<r\}.$ Applying the Reich-Strebel formula (\ref{RSorig}) over the subsurface $\mathbb{D}_r,$
\begin{align*}
    \sum_{i=1}^2 \mathcal{E}(f_i(\mathbb{D}_r),g_i)-\sum_{i=1}^2\mathcal{E}(\mathbb{D}_r,h_i) &= \sum_{i=1}^2 \mathcal{E}(f_i(\mathbb{D}_r),h_i\circ f_i^{-1})-\sum_{i=1}^2\mathcal{E}(\mathbb{D}_r,h_i) \\
    &= \sum_{i=1}^2 -4\textrm{Re}\int_{\mathbb{D}_r}\phi_i \frac{\mu_i}{1-|\mu_i|^2}+ \sum_{i=1}^2 2\int_{\mathbb{D}_r}e(h_i) \frac{|\mu_i|^2}{1-|\mu_i|^2} \\
    &> \sum_{i=1}^2 -4\textrm{Re}\int_{\mathbb{D}_r}\phi_i \frac{\mu_i}{1-|\mu_i|^2}+ \sum_{i=1}^2 4\int_{\mathbb{D}_r}|\phi_i| \frac{|\mu_i|^2}{1-|\mu_i|^2},
\end{align*}
where in the last line we used (\ref{eineq}). Note that difference between the last two lines is increasing with $r,$ so in particular once $r$ is at least $1/2$, there is an $\epsilon>0$ independent of $r$ such that $$\sum_{i=1}^2 \mathcal{E}(f_i(\mathbb{D}_r),g_i)-\sum_{i=1}^2\mathcal{E}(\mathbb{D}_r,h_i)\geq \sum_{i=1}^2 -4\textrm{Re}\int_{\mathbb{D}_r}\phi_i \frac{\mu_i}{1-|\mu_i|^2}+ \sum_{i=1}^2 4\int_{\mathbb{D}_r}|\phi_i| \frac{|\mu_i|^2}{1-|\mu_i|^2}+\epsilon.$$ Since each $\phi_i$ is integrable, $$\sum_{i=1}^2 4\textrm{Re}\int_{\mathbb{D}_r}\phi_i \frac{\mu_i}{1-|\mu_i|^2}\to \sum_{i=1}^2 4\textrm{Re}\int_{\mathbb{D}}\phi_i \frac{\mu_i}{1-|\mu_i|^2}$$ and $$\sum_{i=1}^2 4\int_{\mathbb{D}_r}|\phi_i| \frac{|\mu_i|^2}{1-|\mu_i|^2}\to \sum_{i=1}^2 4\int_{\mathbb{D}}|\phi_i| \frac{|\mu_i|^2}{1-|\mu_i|^2}$$ as $r$ increases to $1$. Using Theorem B, we deduce that $$\sum_{i=1}^2 \mathcal{E}(f_i(\mathbb{D}_r),g_i)-\sum_{i=1}^2\mathcal{E}(\mathbb{D}_r,h_i)\geq \epsilon$$ for $r$ sufficiently close to $1$.  Reversing the roles of $h_i$ and $g_i$, we repeat the argument (while choosing our domains carefully): if $\psi_i$ is the Hopf differential of $g_i$ and $\nu_i$ is the Beltrami form of $f_i^{-1}$, there is a $\delta>0$ such that for $r\geq 1/2,$
\begin{align*}
    \sum_{i=1}^2 \mathcal{E}(\mathbb{D}_r,h_i)-\sum_{i=1}^2\mathcal{E}(f_i(\mathbb{D}_r),g_i)&= \sum_{i=1}^2 \mathcal{E}(\mathbb{D}_r,g_i\circ f_i)-\sum_{i=1}^2\mathcal{E}(f_i(\mathbb{D}_r),g_i) \\
    &\geq \sum_{i=1}^2 -4\textrm{Re}\int_{f_i(\mathbb{D}_r)}\psi_i \frac{\nu_i}{1-|\nu_i|^2}+ \sum_{i=1}^2 4\int_{f_i(\mathbb{D}_r)}|\psi_i| \frac{|\nu_i|^2}{1-|\nu_i|^2} +\delta.
\end{align*}
Since $f_i$ is quasiconformal, it is not hard to justify that $$\sum_{i=1}^2 4\textrm{Re}\int_{f_i(\mathbb{D}_r)}\psi_i \frac{\nu_i}{1-|\nu_i|^2}\to \sum_{i=1}^2 4\textrm{Re}\int_{\mathbb{D}}\psi_i \frac{\nu_i}{1-|\nu_i|^2}$$ as $r\to 1,$ and likewise for the other term. We deduce that
$$\sum_{i=1}^2 \mathcal{E}(\mathbb{D}_r,h_i)-\sum_{i=1}^2\mathcal{E}_r(f_i(\mathbb{D}_r),g_i)\geq \delta$$ for $r$ close to $1$, which gives the contradiction. We conclude that both $f_i$ are the identity map, so that $(h_1,h_2)=(g_1,g_2).$
\end{proof}

\section{The general situation}\label{generalsit}

\subsection{Counterexample without integrability}\label{counter}
We now present Example A, showing that Theorem A is not true without the $L^1$ condition. The minimal diffeomorphisms are simple enough to write out by hand.

\begin{proof}[Example A]
   Let $m$ be the flat Euclidean metric on the upper halfspace $\mathbb{H}\subset \mathbb{C}$. We give two minimal maps $(\mathbb{H},m)\to (\mathbb{H},m)$ with the same boundary data, one of which has Hopf differential $\phi$ with $\int_{\mathbb{H}}|\phi| = \infty$. Conjugating via the Cayley transform $\mathcal{C}$ gives an example of non-uniqueness on the unit disk with the flat metric $|\mathcal{C}'(z)||dz|^2.$ 

Let $\zeta=\xi+i\eta$ be the coordinate on $\mathbb{H}$. For $k>0$ and $k\neq 1$, set $$h_k(\zeta) = \xi + ik\eta, \hspace{1mm} g_k(\zeta)=k\xi+i\eta.$$ Then, $\phi(h_k) = (1-k^2)d\zeta^2$ and $\phi(g_k) = (k^2-1)d\zeta^2,$ so that $f_k=h_k\circ g_k^{-1}$ is a minimal diffeomorphism, explicitly given by $$f_k=h_k\circ g_k^{-1}(\zeta)=k^{-1}\xi + ik\eta.$$ $f_k|_{\partial\mathbb{H}}$ is the map $\xi\mapsto k^{-1}\xi.$ Next, consider the conformal maps $$\varphi_k(\zeta) = k^{-1/2}(\xi + i\eta), \hspace{1mm} \psi_k(\zeta) = k^{1/2}(\xi + i\eta).$$ The map $\tau_k = \varphi_k\circ \psi_k^{-1}$ is simply multiplication by $k^{-1},$ both conformal and a minimal diffeomorphism, and it agrees with $f_k$ on $\partial\mathbb{H}$. 
\end{proof}
For the Hopf differentials $\phi(h_k)$ and $\phi(g_k)$ of $f_k$, the leaf space projections identify the product of $\R$-trees with the upper halfspace $\mathbb{H}$ equipped with the flat metric $4|k^2-1|m$. In contrast to the $\R$-trees from Section \ref{plateausection}, this space admits lots of minimal maps that extend to the boundary. 

Analytically, we take note of the following aspects. Let $\rho=\rho(z)|dz|^2$ be the hyperbolic metric on $\mathbb{D}$ of constant curvature $-1$. The Bers norm of a holomorphic quadratic differential $\phi=\phi(z)dz^2$ on $\mathbb{D}$ is $$||\phi||_\rho^2=\sup_{z\in\mathbb{D}}|\phi(z)|^2\rho^{-2}(z).$$ As we mentioned in Remark \ref{Bers}, if $\sigma$ is a complete metric on $\mathbb{D}$ with pinched negative curvature and $f:\mathbb{D}\to (\mathbb{D},\sigma)$ is a quasiconformal harmonic diffeomorphism, then the Hopf differential of $f$ has finite Bers norm. Turning to the example above, the Bers norm of $\phi(h_k)$ blows up as $z\to -i$, since the Cayley transform pulls back $y^{-2}|d\zeta|^2$ to $\rho(z)|dz|^2$, and $$|\phi(h_k)|y^2\to \infty$$ as $y\to\infty$. Thus, our example reflects the failure of Remark \ref{Bers} in general.

It is proved in \cite{AMM} that a quadratic differential $\phi$ on $\mathbb{D}$ has finite Bers norm if and only if its maximal $\phi$-disks (see \cite{AMM} for the definition) have bounded radius, which implies that if we carry out the leaf-space constructions for $\phi$, then the image of any ball not containing a zero of $\phi$ under the product of the leaf-space projections must have bounded radius. This suggests to us that, generally speaking, minimal maps to products of $\R$-trees arising from such differentials should have more controlled geometry. These observations provide evidence for a positive resolution to Problem A.

\subsection{Stability}
In the general case we can prove a stability result, Theorem A'. Given a surface $\Sigma$ with Riemannian metrics $\sigma_1,\sigma_2$, a diffeomorphism $f$ of $\Sigma,$ and a relatively compact open subset $U$ of $\Sigma$, let $A(f|_U)$ be the area of the graph of $f$ in the product $(\Sigma^2,\sigma_1\oplus\sigma_2).$
\begin{defn}\label{stabilitydef}
    A minimal diffeomorphism $f:(\Sigma,\sigma_1)\to (\Sigma,\sigma_2)$ is stable if for every relatively compact open subset $U$ of $\Sigma$ and $C^\infty$ path of diffeomorphisms $f_t: (\Sigma,\sigma_1)\to (\Sigma,\sigma_2)$ such that $f_0=f$ and $f_t=f$ on the complement of $U$, 
$$\frac{d^2}{dt^2}|_{t=0}A(f_t|_U) \geq 0.$$
\end{defn} 
\begin{remark}
    Since we're starting at a critical point for the area, our definition of stability is equivalent to asking that the second derivative of area at time zero be non-negative along any path $f_t$ that's tangent to a variation with compact support in $U.$
\end{remark}
Recall that Theorem A' says that any minimal diffeomorphism is stable. We prove the theorem by taking the second derivative on the new main inequality.
\begin{proof}[Proof of Theorem A']
    Let $f_t:(\mathbb{D},\sigma_1)\to (\mathbb{D},\sigma_2)$ be a path of diffeomorphisms with $f_0=f$ and $f_t=f$ on the complement of a relatively compact subset $U$. It does no harm to enlarge $U$, so let's assume it is a disk centered at the origin. 
    Let $(h_1,h_2)$ be the initial minimal map to the product of disks, and $t\mapsto h_t=(h_1^t,h_2^t)$ the path of minimal maps engendered by the path of diffeomorphisms $t\mapsto f_t.$ Note that stability is equivalent to energy stability: it suffices to show that $$\frac{d^2}{dt^2}|_{t=0} \sum_{i=1}^2\mathcal{E}(U, h_i^t)\geq 0.$$ Indeed, if area is lowered to second order, then by a well-known application of the measurable Riemann mapping theorem, one can find a path of quasiconformal maps $g_t:U\to U$ starting at the identity and such that $h_t\circ g_t$ is conformal. For every $t,$ $\mathcal{E}(U,h_t\circ g_t)=A(h_t\circ g_t|_U),$ and hence energy is lowered to second order along the modified variation $h_t\circ g_t$. The other side of the equivalence (which is not needed for our proof) is clear because energy dominates area, and minimal maps are critical points for both energy and area. 
    
    We now compute in the same manner as the proof of Theorem A. Let $\phi_1=\phi$ be the Hopf differential of $h_1$ and $\phi_2=-\phi$ the differential of $h_2$. For each $i=1,2$ and $t>0,$ set $v_i^t=(h_i^t)\circ h_i^{-1}$ and let $\mu_i^t$ be the Beltrami form of $v_i^t,$ which vanishes outside of $U$. For all $t$ we have that $h_1^t=h_2^t$ on $\partial U$, hence $v_1^t=v_2^t$ on $\partial U$, and moreover the new main inequality applies to $\phi$, $-\phi,$ and $\mu_1^t,\mu_2^t$ in restriction to $\overline{U}$. Using $h_i^t=h_i\circ (v_i^t)^{-1}$, (\ref{RSorig}), and (\ref{eineq}), 
\begin{equation}\label{final}
    \sum_{i=1}^2\mathcal{E}(U, h_i^t) - \sum_{i=1}^2\mathcal{E}(U,h_i) \geq\sum_{i=1}^2 -4\textrm{Re}\int_{U}\phi_i \frac{\mu_i^t}{1-|\mu_i^t|^2}+ \sum_{i=1}^2 4\int_{U}|\phi_i| \frac{|\mu_i^t|^2}{1-|\mu_i^t|^2}
\end{equation}
 with equality if and only if both $\mu_i^t$ are zero almost everywhere. For each fixed $t$, Theorem B returns that the right hand side of (\ref{final}) is non-negative. Since the initial map $(h_1,h_2)$ is minimal, or really just because $\phi_1+\phi_2=0,$ the first derivatives of both sides of (\ref{final}) are zero. It follows that the left hand side of (\ref{final}) has non-negative second derivative: 
    \begin{equation}\label{finalderi}
        \frac{d^2}{dt^2}|_{t=0} \Big (\sum_{i=1}^2\mathcal{E}(U, h_i^t) - \sum_{i=1}^2\mathcal{E}(U,h_i)\Big )\geq 0.
    \end{equation}
    We conclude by noting that $$\frac{d^2}{dt^2}|_{t=0}  \sum_{i=1}^2\mathcal{E}(U, h_i^t)=\frac{d^2}{dt^2}|_{t=0} \Big (\sum_{i=1}^2\mathcal{E}(U, h_i^t) - \sum_{i=1}^2\mathcal{E}(U,h_i)\Big )$$ and applying (\ref{finalderi}).

\end{proof}

\bibliographystyle{plain}
\bibliography{bibliography}

\end{document}